\newtheorem{theorem}{Theorem}
\newtheorem{lemma}[theorem]{Lemma}
\newtheorem{lettertheorem}{Theorem}
\newtheorem{letterlemma}[lettertheorem]{Lemma}
\theoremstyle{definition}
\theoremstyle{remark}
\numberwithin{equation}{section}
\newcommand{\hg}{\H_{\g}}
\newcommand{\cw}{C_{\om}}
\newcommand{\D}{\mathbb{D}}
\newcommand{\DD}{\widehat{\mathcal{D}}}
\newcommand{\Dd}{\widecheck{\mathcal{D}}}
\newcommand{\M}{\mathcal{M}}
\newcommand{\DDD}{\mathcal{D}}
\newcommand{\N}{\mathbb{N}}
\renewcommand{\phi}{\varphi}
\def\a{\alpha}       \def\b{\beta}        \def\g{\gamma}
     \def\om{\omega}      
                  \def\z{\zeta}
\def\omg{\widehat{\omega}}
\def\fg{\widehat{f}}
\def\C{\mathcal{ C}}
\renewcommand{\H}{\mathcal{H}}
\newenvironment{Prf}{\noindent{\emph{Proof of}}}
{\hfill$\Box$ }
\newcommand{\opnorm}{\@ifstar\@opnorms\@opnorm}
\newcommand{\@opnorms}[1]{%
  \left|\mkern-1.5mu\left|\mkern-1.5mu\left|
   #1
  \right|\mkern-1.5mu\right|\mkern-1.5mu\right|
}
\newcommand{\@opnorm}[2][]{%
  \mathopen{#1|\mkern-1.5mu#1|\mkern-1.5mu#1|}
  #2
  \mathclose{#1|\mkern-1.5mu#1|\mkern-1.5mu#1|}
}
\begin{document}

\title[Generalized Cesàro operator]{Generalized Cesàro operator acting on Hilbert spaces of analytic functions.}

\keywords{Cesàro operator, Hilbert spaces, weighted Bergman spaces, Bergman  reproducing kernel, radial weight.}

\author{Alejandro Mas}
\address{Departamento de Análisis Matemático, Universidad de Valencia, 46100 Burjassot, Spain} \email{alejandro.mas@uv.es}

\author{Noel Merchán}
\address{Departamento de Matemática Aplicada, Universidad de M\'alaga, Campus de
Teatinos, 29071 M\'alaga, Spain} \email{noel@uma.es}

\author{Elena de la Rosa}
\address{Departamento de An\'alisis Matem\'atico, Universidad de M\'alaga, Campus de
Teatinos, 29071 M\'alaga, Spain} 
\email{elena.rosa@uma.es}

\thanks{This research was supported in part by Ministerio de Ciencia e Innovaci\'on, Spain, project PID2022-136619NB-I00; La Junta de Andaluc{\'i}a, project FQM210; The first author was partially supported by Grant PID2019-106870GB-I00 from Ministerio de Ciencia e Innovación.}

\subjclass[26A33, 30H30]{26A33,30H30}

\maketitle

\begin{abstract}
Let $\mathbb{D}$ denote the unit disc in $\mathbb{C}$. We define 
 the generalized Cesàro operator as follows
 $$
  C_{\omega}(f)(z)=\int_0^1 f(tz)\left(\frac{1}{z}\int_0^z B^{\omega}_t(u)\,du\right)\,\omega(t)dt,$$
  where  $\{B^{\omega}_\zeta\}_{\zeta\in\mathbb{D}}$ are the  reproducing kernels of the Bergman space $A^2_\omega$ induced by a radial weight $\omega$ in the unit disc $\mathbb{D}$. We study the action of the operator $C_{\om}$ on weighted Hardy spaces of analytic functions $\H_{\g}$, $\g>0$ and on general weighted Bergman spaces $A^2_{\mu}$.

  \end{abstract}
\section{Introduction}

Let $\H(\D)$ denote the space of analytic functions in the unit disc $\D=\{z\in\mathbb{C}:|z|<1\}$.

For $\g>0$, let $\H_{\g}$ denote the Hilbert space of analytic functions in $\D$ such that its reproducing kernels are given by 
$$K_{\om}(z)=\frac{1}{(1-z\bar{\om})^\g}=\sum\limits_{n=0}^{\infty} \g(n)(\bar{\om} z)^{n}, \, z, \, \om \in \D.$$
 It is clear that the sequence $\g(n)$ is given by
$\g(0)=1$, $\g (1)=\g$ and $\g(n)=\frac{\Gamma(n+\g)}{\Gamma(\g) n!}$, $n \in \N$. Actually, this family of spaces are well-known: for $\g=1$ the space $\H_{\g}$ is the Hardy space $\H_1=H^2$ and $\g(n)=1$ for all $n \in \N$. For $\g >1$, $\H_{\g}$ consists of the standard weighted Bergman space $A^2_{\g-2}$ and for $\g <1$, it is the weighted Dirichlet space $\H_{\g}=D^2_{\g}$. 

Observe that for $\g=0$, the corresponding space would be the classical Dirichlet space $D^2$, so it is not included in the definition of the spaces $\H_{\g}$.

In other words, the Hilbert space $\H_{\g}$ consists of all the analytic functions such that

$$ \|f\|_{\H_{\g}}^2= |f(0)|^2+ \int_{\D} |f'(z)|^2 (1-|z|)^{\g} dA(z) <\infty,$$
where $dA(z)=\frac{dx\,dy}{\pi}$ is the normalized area measure on $\D$.
Moreover, a simple observation yields an equivalent norm in terms of the coefficients of an analytic function $f$. If $f(z)=\sum \limits_{k=0}^{\infty} \fg(k) z^k$,

$$ \|f\|_{\H_{\g}}^2 \asymp \sum\limits_{n=0}^{\infty} | \fg(n)|^2 (n+1)^{1-\g}.$$

Further, we can consider more general weighted Bergman spaces than the ones defined by $\H_{\g}$ with $\g>1$. For a  nonnegative function  $\om  \in L^1_{[0,1)}$, the extension to $\D$, defined by $\om(z)=\om(|z|)$ for all $z\in\D$, is called a radial weight.
 Let $A^2_{\om}$ denote the weighted Bergman space of $f\in\H(\D)$ such that $\|f\|_{A^2_\omega}^2=\int_\D|f(z)|^2\omega(z)\,dA(z)<\infty$.
 Throughout this paper we assume $\widehat{\om}(z)=\int_{|z|}^1\om(s)\,ds>0$ for all $z\in\D$, for otherwise $A^2_\om=\H(\D)$.

 For any radial weight, the convergence in $A^2_{\om}$ implies the uniform convergence in compact subsets, so the point evaluations $L_z$ are bounded linear functionals in $A^2_{\om}$ and by the Riesz Representation Theorem there exist Bergman reproducing kernels $B^\om_z\in A^2_\om$ such that
$$ L_z(f)=f(z)=\langle f, B_z^{\om}\rangle_{A^2_\om}=\int_{\D}f(\z)\overline{B_z^{\om}(\z)}\om(\z)dA(\z),\quad f \in A^2_{\om}.$$

For a complex sequence $\{a_k\}_{k=0}^{\infty}$, the classic Cesàro operator is defined as follows: 
$$ \C (\{a_k\})=\left\{\frac{1}{n+1}\sum\limits_{k=0}^{n} a_k \right\}_{n=0}^{\infty}.$$
It is well known that the Cesàro operator is bounded on $l^p$, $1<p<\infty$. This result was mostly showed by Hardy, whose main aim was to provide a simpler proof of the Hilbert inequality in \cite{Hardy1920, Hardy1925} and Landau \cite{Landau}, whose  contribution was obtaining the sharp constant in the inequality, that is, the norm of the operator, among other authors. 

Further, it can be considered as an operator between analytic functions by identifying each analytic function with its Taylor coefficients as follows: for $f \in \H(\D)$, $f(z)=\sum \limits_{k=0}^{\infty} \fg(k) z^k$, 
$$ \C (f)(z)=\sum\limits_{n=0}^{\infty} \left(\frac{1}{n+1}\sum\limits_{k=0}^{n} \fg(k) \right) z^n, \quad z \in \D. $$

Observe that it defines an analytic function, and a simple calculation gives the following integral representation:

\begin{equation}
\label{intclassicCesaro}
\C(f)(z)=\int_0^1 f(tz)\frac{1}{1-tz} dt, \quad z \in \D.
\end{equation}
 
This operator is bounded on $H^p$, $0<p<\infty$. This result has been showed by several authors and on different ways such as Hardy \cite{Hardy29}, Siskakis \cite{SiskakisHp, SiskakisH1}, Miao \cite{Miao}, Stempak \cite{Stempak} and Andersen \cite{Andersen}, among others.

The boundedness of the Cesàro operator on Bergman spaces was studied in \cite{Andersen} and \cite{SisBergman1} where  it is shown that the Cesàro operator is bounded from $A^p_{\a}$ into itself if $p>0$ and $\a >-1$.

Regarding Dirichlet spaces, Galanopoulos \cite{Ga2001} proved that it is bounded on the weighted Dirichlet spaces $D^2_{\a}$ if $0<\a<1$.

Due to the historical magnitude of this classical operator and the authors that have been working on it, different generalizations have been raised during the last decades (\cite{Blascocomplex, GaGiMer22, GaGirMasMer, Stempak}). Bearing in mind the formula \eqref{intclassicCesaro}, we are interested in replacing the kernel $\frac{1}{1-tz}$ of the integral representation with a more general kernel. In that sense, we are going to focus on the following generalization of the kernel induced by radial weights, which was previously introduced in works regarding the Hilbert operator \cite{ MerPeldelaR, PeldelaRosa21}.

For a radial weight $\omega$, we consider the generalized Cesàro operator
\begin{equation}\label{eq:i1}
    C_{\omega}(f)(z)=\int_0^1 f(tz)\left(\frac{1}{z}\int_0^z B^{\om}_t(\z)d\z\right)\,\om(t)dt,
\end{equation}
where $\{B^\om_z\}_{z\in\D}\subset A^2_\om$ are the Bergman reproducing kernels of $A^2_\omega$.
Notice that this operator is well defined for any analytic function and the choice $\om=1$ gives \eqref{intclassicCesaro}. 

One of the first and main obstacles that we find when dealing with the operator \eqref{eq:i1} is that Bergman reproducing kernels have not an explicit formula in general (this is not the case for standard weights $\nu_{\a}(z)=(1-|z|)^{\a}$, $\a>-1$, since Bergman reproducing kernels induced by $\nu_{\a}$ have nice properties and they can be written as $B^{\nu_{\a}}_z(\z)=(1-\overline{z}\z)^{-(2+\a)}$ ). Consequently, we are forced to use that for any radial weight $\om$ they can be written as 
 $B^\om_z(\z)=\sum \overline{e_n(z)}e_n(\z)$  for each orthonormal basis $\{e_n\}$ of $A^2_\om$, and therefore, by using the normalized monomials as basis, we can obtain the following representation in terms of the odd moments of the weight, denoted by $\om_{2n+1}$:
\begin{equation}\label{eq:B}
B^\om_z(\z)=\sum_{n=0}^\infty\frac{\left(\overline{z}\z\right)^n}{2\om_{2n+1}}, \quad z,\z\in \D.
\end{equation}
 In general, from now on, we will write $\om_x=\int_0^1r^x\om(r)\,dr$ for all $x\ge0$.  In addition, we can write the norm of $A^2_\omega$ in terms of the Taylor coefficients of an analytic function as follows, 
$$ \| f \|^2_{A^2_\omega} = \sum_{n=0}^{\infty} 2 \omega_{2n+1} |\fg(n)|^{2}. $$

 The primary purpose of this paper is to describe the radial weights $\omega$ so that $C_\omega$ is bounded on $\H_{\g}$, for $\g>0$ and on general weighted Bergman spaces. It is worth mentioning that just as Galanopoulos \cite{Ga2001} pointed out that $\C$ is not bounded in the Dirichlet space $D ^2$, this fact is true not only for $\om=1$ but also for any radial weight. Indeed, by using the formula \eqref{eq:B}, for any radial weight $\om $,
 $$ C_{\om}(1)(z)=\sum \limits_{n=0}^{\infty}\frac{\om_n}{2 (n+1)\om_{2n+1}} z^n,$$
 so, since the moments of a radial weight form a decreasing sequence, we have  
 $\|C_{\om}(1)\|_{D^2}^2 \asymp \sum \limits_{n=0}^{\infty} \frac{\om_n ^2}{4 (n+1)\om_{2n+1}^2 }\geq  \sum \limits_{n=0}^{\infty} \frac{1}{4 (n+1)}$ which implies that $C_{\om}(1)$ does not belong to $D ^2$.

Before stating the main result of the paper, we need to introduce some notation and definitions.  A radial weight $\omega$
 belongs to the class~$\DD$ if there exists $C=C(\omega)>1$ such that
 $\widehat{\om}(r)\le C\widehat{\om}(\frac{1+r}{2})$ for all $0\le r <1$. 
This condition implies a restriction on the decay of the weight, for example, if $\om \in \DD$, $\om$ cannot decrease exponentially. However, every increasing weight belongs to $\DD$, and weights of $\DD$ admit an oscillatory behaviour.
The study of the intrinsic nature of this class of weights entails a considerable difficulty, which has led to a deep research for years, collected in works such as \cite{PelSum14,PR19,memoirs}.

A radial weight $\om\in\Dd$ if there exist $K=K(\om)>1$ and $C=C(\om)>1$ such that $\widehat{\om}(r)\ge C\widehat{\om}\left(1-\frac{1-r}{K}\right)$ for all $0\le r<1$. We write the class $\DDD=\DD\cap\Dd$.
Observe that standard weights $v_{\a}=(1-|z|)^{\a}$, $\a>-1$ belong to the class $\DDD$, which means that $H_{\g}=A^2_{\g-2}$, $\g>1$ are particular cases of weighted Bergman spaces $A^2_{\mu}$, $\mu \in \DDD$.

Moreover, a radial weight $\om\in\M$ if there exist constants $C=C(\om)>1$ and $K=K(\om)>1$ such that $\om_{x}\ge C\om_{Kx}$ for all $x\ge1$. 
Peláez and Rättyä showed that the classes $\Dd$ and $\M$ are closely related. They recently proved that $\Dd\subset \M$ \cite[Proof of Theorem~3]{PR19} but $\Dd \subsetneq \M$ \cite[Proposition~14]{PR19}. However, \cite[Theorem~3]{PR19} shows that $\DDD=\DD\cap\Dd=\DD\cap\M$. The theory of these classes of weights has been basically developed by these authors in the work \cite{PR19}, and they have shown that these classes of weights arise on a natural way in significant questions of the operator theory and the weighted Bergman spaces. For instance, $\DDD$ describes the radial weights such that the following Littlewood-Paley formula holds
$$\|f\|_{A^p_\om}^p\asymp\int_\D|f^{(n)}(z)|^p(1-|z|)^{np}\om(z)\,dA(z)+\sum_{j=0}^{n-1}|f^{(j)}(0)|^p,\; f\in\H(\D), 0<p<\infty, \; n \in \N$$
\noindent or the radial weights such that
 $ P_\om(f)(z)=\int_{\D}f(\z) \overline{B^\om_{z}(\z)}\,\om(\z)dA(\z)$ is bounded and onto from $L^{\infty}$ to the Bloch space, among other important results.

\begin{theorem}
\label{weighted H spaces}
Let $\om$ be a radial weight, $\g>0$. Then $C_{\om}: \H_{\g} \to \H_{\g} $ is bounded if and only if $\om \in \DDD$.
\end{theorem}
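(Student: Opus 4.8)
The natural first move is to pass to Taylor coefficients. Expanding $B_t^\om(\z)=\sum_n(t\z)^n/(2\om_{2n+1})$ and integrating gives $\frac1z\int_0^zB_t^\om(\z)\,d\z=\sum_n\frac{(tz)^n}{2(n+1)\om_{2n+1}}$, so that $C_\om(f)=D_\om(fg)$, where $g(z)=\sum_n\frac{z^n}{2(n+1)\om_{2n+1}}$ and $D_\om(h)(z)=\int_0^1h(tz)\om(t)\,dt=\sum_m\widehat h(m)\om_m z^m$; concretely,
\[
\widehat{C_\om(f)}(m)=\frac{\om_m}{2}\sum_{k=0}^m\frac{\fg(k)}{(m-k+1)\,\om_{2(m-k)+1}} .
\]
Since $\nm{f}_{\H_\g}^2\asymp\sum_n(n+1)^{1-\g}|\fg(n)|^2$, boundedness of $C_\om$ on $\H_\g$ is equivalent to $\ell^2$-boundedness of the lower-triangular matrix
\[
T_{m,k}=\frac{(m+1)^{(1-\g)/2}}{(k+1)^{(1-\g)/2}}\cdot\frac{\om_m}{2(m-k+1)\,\om_{2(m-k)+1}},\qquad 0\le k\le m .
\]
Two standard facts will be used throughout: $\om_x\asymp\om_{x+1}$ for every radial weight (Cauchy--Schwarz makes $x\mapsto\om_{x+1}/\om_x$ increasing and $\le1$), and $\DDD=\DD\cap\Dd=\DD\cap\M$ by \cite[Theorem~3]{PR19}.

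For sufficiency, suppose $\om\in\DDD$. Then $\om_x\asymp\om_{2x}$ (this is $\DD$), and $\om\in\M$ provides the complementary estimate $\om_x/\om_y\gtrsim(y/x)^c$ for $1\le x\le y$ and some $c=c(\om)>0$. I would verify Schur's test for $T$ with the weights $p_k=(k+1)^\delta$ for a fixed $\delta$ with $-\tfrac{1+\g}{2}<\delta<-\tfrac{1-\g}{2}$; this interval is nonempty precisely because $\g>0$. Split each Schur sum at $k=m/2$. For $k\le m/2$ one has $m-k+1\asymp m+1$ and $\om_{2(m-k)+1}\asymp\om_{m-k}\asymp\om_m$ by $\DD$, hence $T_{m,k}\asymp(m+1)^{-(1+\g)/2}(k+1)^{-(1-\g)/2}$, and both $\sum_{k\le m/2}T_{m,k}p_k$ and $\sum_{m\ge2k}T_{m,k}p_m$ reduce to elementary power sums reproducing $p_m$ and $p_k$ under the stated constraints on $\delta$. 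For $m/2<k\le m$ one has $k+1\asymp m+1$, and the estimate from $\M$ gives $T_{m,k}\lesssim(m-k+1)^{c-1}(m+1)^{-c}$; the corresponding Schur sums are again power sums, dominated using $\sum_{j\le m}(j+1)^{c-1}\asymp(m+1)^c$, and they hold for every $\delta$. Thus $T$ is bounded on $\ell^2$.

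For necessity I would argue by contraposition, testing on the Cauchy kernels $f_a(z)=(1-\bar az)^{-\g}$, for which $\fg_a(k)=\g(k)\bar a^{\,k}\asymp(k+1)^{\g-1}\bar a^{\,k}$ and $\nm{f_a}_{\H_\g}^2\asymp(1-|a|^2)^{-\g}$. Writing $N\asymp(1-|a|)^{-1}$ and using the coefficient formula, I would estimate $\widehat{C_\om(f_a)}(m)$ by splitting the convolution at $k=m/2$: the ``low'' block contributes, via $\sum_{k\le m/2}(k+1)^{\g-1}\asymp(m+1)^\g$, a term governed by $\om_m/\om_{2m}$ (the quantity controlling $\DD$); the ``high'' block contributes $\asymp(m+1)^{\g-1}\sum_{j\le m/2}\frac{1}{(j+1)\om_{2j+1}}$, a sum which is $\asymp1/\om_m$ precisely when $\om\in\M$ and acquires genuine extra growth otherwise. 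Plugging this into $\nm{C_\om(f_a)}_{\H_\g}^2\asymp\sum_m(m+1)^{1-\g}\om_m^2|\widehat{C_\om(f_a)}(m)|^2$ and splitting the range into $m\lesssim N$ and $m\gtrsim N$, one checks that for $\om\in\DDD$ both pieces come out $\asymp(1-|a|^2)^{-\g}\asymp\nm{f_a}_{\H_\g}^2$, whereas if $\om\notin\DD$ or $\om\notin\M$ one selects a sequence $a\to\partial\D$ adapted to the scales where the respective condition fails and obtains $\nm{C_\om(f_a)}_{\H_\g}/\nm{f_a}_{\H_\g}\to\infty$.

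The main obstacle is this last step. Crude test functions (a single monomial, or a finite block $\sum_{k\in I}z^k$) register only a polynomial-in-$n$ amount of the defect of $\DD$ or $\M$, and so cannot detect a merely logarithmic failure; the Cauchy kernels do register it, but only after the split into the two frequency ranges and a robust lower estimate for the inner convolution that survives the possible oscillation of $\om$ inside a dyadic scale. Making that estimate precise, and organizing the two cases $\om\notin\DD$ and $\om\notin\M$, is exactly where the two-sided moment descriptions of the classes $\DD$ and $\M$ and their structural lemmas from \cite{PR19} become essential. The sufficiency, by contrast, is after the coefficient reduction a routine Schur test.
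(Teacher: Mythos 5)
Your sufficiency argument is correct and genuinely different from the paper's. The reduction to the lower-triangular matrix $T_{m,k}=\frac{(m+1)^{(1-\g)/2}}{(k+1)^{(1-\g)/2}}\cdot\frac{\om_m}{2(m-k+1)\om_{2(m-k)+1}}$ and the Schur test with weights $(k+1)^{\delta}$, $-\frac{1+\g}{2}<\delta<-\frac{1-\g}{2}$, checks out: on $k\le m/2$ the class $\DD$ gives $\om_{2(m-k)+1}\asymp\om_m$ and the power-sum arithmetic closes exactly under your constraints on $\delta$, while on $m/2<k\le m$ Lemma \ref{caract M}(ii) gives $T_{m,k}\lesssim (m-k+1)^{\b-1}(m+1)^{-\b}$ and the sums close for every $\delta$. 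This treats all $\g>0$ at once and is more elementary than the paper, which instead rewrites the output through the coefficients of $(1-z)^{-\b}$ and then needs Carleson measures for $\g=1$, the Aleman--Constantin Littlewood--Paley theorem with B\'ekoll\'e weights for $\g<1$, and the separate Bergman-space Theorem \ref{weighted Bergman spaces} for $\g>1$.

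The necessity, however, is only a program, and it contains a genuine gap that you yourself flag. Concretely: (a) the mechanism you describe for detecting $\DD$ does not work as stated. For the low block $k\le m/2$ the inner index $2(m-k)+1$ lies in $[m+1,2m+1]$, so the only uniform lower bound is $1/\om_{2(m-k)+1}\ge 1/\om_{m+1}$, and the block produces the harmless ratio $\om_m/\om_{m+1}\asymp1$, not a term ``governed by $\om_m/\om_{2m}$''. To create a $\DD$-detecting ratio you must either restrict to $k\le\e m$ with $\e$ small, or, as the paper does, look at output frequencies a fixed multiple beyond the effective degree of the test function (output $m\in[7N,8N]$ against input $k\le N$, forcing the inner index up to $\approx 12N$), and crucially anchor both moment indices at the endpoints of the output block so that the conclusion $\om_{8N}\lesssim\om_{12N}$ holds for every $N$ rather than on average. (b) Testing on $f_a$ only gives an inequality averaged in $m$ against $(m+1)^{\g-1}a^{2m}$; passing from ``for each scale there is a good $m$'' to the pointwise moment conditions defining $\DD$ and $\M$ is precisely the missing step (for $\M$ this can be rescued by the harmonic-sum/logarithm argument, since there the two moment indices can be pinned at the block endpoints, but for $\DD$ the ratio is attached to the running index $m$ and does not transfer). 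Finally, your claim that finite coefficient blocks cannot detect the failure is contradicted by the paper itself: its necessity proof uses exactly $f_N(z)=\sum_{n\le N}(n+1)^{(\g-1)/2}z^n$ and $f_{N,M}(z)=\sum_{n\le MN}z^n$ together with Jensen's inequality, and the $\log M$ growth extracted there is what forces $\om_N\ge C'\om_{MN}$, i.e.\ $\om\in\M$; combined with $\DD$ and \cite[Theorem~3]{PR19} this yields $\om\in\DDD$. So the forward implication stands as an acceptable alternative proof, but the converse as proposed is incomplete.
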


 The underlying nature of the spaces $\H_{\g}$ that we are considering and as far as we know, the almost unique formula for the Bergman reproducing kernels \eqref{eq:B} lead us to address the problem by working on coefficients, so an appropriate expression for $C_{\om}$ in terms of coefficients plays a key role in this work. Let $f \in \H(\D)$, $f(z)=\sum \limits_{k=0}^{\infty} \fg(k) z^k$, by \eqref{eq:B} and a change of variable,
\begin{equation}
\label{coef C_om}
\begin{split}
C_{\om} (f)(z)&=\sum\limits_{n=0}^{\infty} \frac{1}{2(n+1)\om_{2n+1}}\left(\sum\limits_{k=0}^{\infty} \fg (k)\om_{n+k}z^{n+k}\right)\\
&= \sum\limits_{n=0}^{\infty}\left(\sum\limits_{k=0}^{n} \frac{\fg(k)}{2(n-k+1)\om_{2(n-k)+1}}\right)\om_n z^n.
\end{split}
\end{equation}
The proof of the Theorem \ref{weighted H spaces} for $\g=1$ draws strongly on accurate estimates of the moments $\om_{2(n-k)+1}$ and $\om_n$ and on the Carleson measures theory.

For $0<\g<1$, the Carleson measures description was solved in \cite{Stegenga1980}, but the innocent looking condition that characterize such measures is not easy to work with, so we are forced to appeal to Littlewood-Paley formulas for non radial weights, specifically whose $\nu$ on $\mathbb{D}$ which belongs to one of the Békollé classes $B_{p}(\a)$ for some $p>1$ and $\a >-1$. 

The proof of the case $\g>1$ is slightly simpler since it is not necessary to use the Carleson measures tool. Going further, we are able to characterize the boundedness of the Cesàro-type operator $C_{\om}$ in more general weighted Bergman spaces $A^2_{\mu}$, $\mu \in \DDD$.

\begin{theorem}
\label{weighted Bergman spaces}
Let $\mu$ and $\om$ be radial weights, $\mu \in \DDD$. Then $C_{\om}: A^{2}_{\mu} \to A^{2}_{\mu} $ is bounded if and only if $\om \in \DDD$.
\end{theorem}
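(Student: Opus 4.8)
The whole analysis will be carried out on Taylor coefficients. By \eqref{coef C_om} and the identity $\nm{f}_{A^2_\mu}^2=\sum_{n\ge0}2\mu_{2n+1}\sabs{\widehat{f}(n)}^2$, after the unitary change of variables $\widehat{g}(n)=\sqrt{2\mu_{2n+1}}\,\widehat{f}(n)$ the boundedness of $C_\om$ on $A^2_\mu$ is equivalent to the $\ell^2$--boundedness of the lower triangular matrix
$$
T_{n,k}=\frac{\sqrt{\mu_{2n+1}}}{\sqrt{\mu_{2k+1}}}\cdot\frac{\om_n}{2(n-k+1)\,\om_{2(n-k)+1}},\qquad 0\le k\le n,
$$
and since $T_{n,k}\ge0$ one may always restrict to nonnegative coefficient sequences. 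The recurring tools are the standard moment estimates for $\DDD$: if $\eta\in\DDD$ then $\eta_x\asymp\widehat{\eta}(1-1/x)$ and there exist $0<\gamma\le\beta$ (depending on $\eta$) with $(n/m)^{\gamma}\lesssim\eta_m/\eta_n\lesssim(n/m)^{\beta}$ for $1\le m\le n$; I will use these for $\mu$ throughout, and for $\om$ once it is known to lie in $\DDD$. In particular $\mu_{2n+1}$ is comparable on dyadic blocks and $\sum_{k\le K}\mu_{2k+1}^{-1}\asymp K/\mu_{2K+1}$.

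\emph{Sufficiency} ($\om\in\DDD\Rightarrow$ boundedness). I would prove $\nm{T}_{\ell^2\to\ell^2}<\infty$ by Schur's test with the weight $p_k=(k+1)^{-\delta}$ for a small fixed $\delta\in(0,1)$. Writing $j=n-k$ and using $\om_{2j+1}\asymp\om_j$ (valid since $\om\in\DD$), the estimate $\sqrt{\mu_{2n+1}}/\sqrt{\mu_{2k+1}}\le1$ for the row sums $\sum_k T_{n,k}p_k$ and $\sqrt{\mu_{2n+1}}/\sqrt{\mu_{2k+1}}\lesssim(k/n)^{\gamma_\mu/2}$ for the column sums $\sum_n T_{n,k}p_n$, each of the four Schur sums splits into the regimes $j\lesssim\min(k,n)$ and $j\gtrsim\min(k,n)$ and is closed by the power bounds for $\mu$ and $\om$; the two cancellations that actually do the work are $\sum_{j\le n}(j+1)^{-1}(j/n)^{\gamma_\om}\lesssim1$ (this is where $\om\in\Dd$ enters, to make the low--$j$ part of the kernel summable) and $\sum_{j\ge k}(j+1)^{-1-\delta}(k/j)^{\gamma_\mu/2}\lesssim(k+1)^{-\delta}$.

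\emph{Necessity} ($C_\om$ bounded $\Rightarrow\om\in\DDD$). I would use two families of test functions. For $\om\in\DD$, test on the truncated weighted series $f_K(z)=\sum_{k=0}^{K}\mu_{2k+1}^{-1}z^k$, for which $\nm{f_K}_{A^2_\mu}^2\asymp\sum_{k\le K}\mu_{2k+1}^{-1}\asymp K/\mu_{2K+1}$; a lower estimate of $\nm{C_\om f_K}_{A^2_\mu}^2$ obtained by keeping the coefficients with index $n\asymp K$ and carefully accounting for the variation of $\om_{2(n-k)+1}$ as $k$ runs over $\{0,\dots,K\}$ leads to a moment inequality that, compared with $\nm{f_K}_{A^2_\mu}^2$, forces $\sup_{n\ge1}\om_n/\om_{2n}<\infty$, i.e.\ $\om\in\DD$. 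For $\om\in\Dd$, test on the normalized reproducing kernels $b^\mu_a=B^\mu_a/\nm{B^\mu_a}_{A^2_\mu}$ with $a=1-1/N\uparrow1$; summing the coefficients of $C_\om b^\mu_a$ with index $n\in[N,2N]$ and using $\nm{B^\mu_a}_{A^2_\mu}^2=\sum_m a^{2m}/(2\mu_{2m+1})\asymp N/\mu_{2N+1}$ gives $\nm{C_\om b^\mu_a}_{A^2_\mu}^2\gtrsim\om_N^2\bigl(\sum_{j\le N/2}\tfrac{1}{(j+1)\om_{2j+1}}\bigr)^2$, hence $\om_N\sum_{j\le N/2}\tfrac{1}{(j+1)\om_{2j+1}}\lesssim1$ for all $N$. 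Combining this with $\om\in\DD$ (so $\om_{2j+1}\asymp\om_j$) and passing from the sum to an integral, the condition reads $\widehat{\om}(1-1/N)\int_0^{1-1/N}\tfrac{dr}{(1-r)\widehat{\om}(r)}\lesssim1$, which by \cite{PR19} is exactly $\om\in\Dd$. Thus $\om\in\DDD$.

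\emph{Main difficulty.} The delicate point is the necessity of $\om\in\DD$. Any "concentrated'' test function (a monomial, or a reproducing kernel with $a\to1$) only produces a polynomial estimate $\om_n\lesssim(n+1)^{A}\om_{2n}$; to capture the full dichotomy one must use a test function whose mass is spread over a whole block $\{0,\dots,K\}$ with the weights $\widehat{f}(k)\asymp\mu_{2k+1}^{-1}$ dictated by $T$, so as to feel the entries far below the diagonal — precisely the ones that blow up when $\om\notin\DD$ — and the accompanying estimate requires keeping track of the slow but, in the non-$\DD$ range, non-negligible variation of $\om$ over the window $[2(n-K),2n]$. A secondary, purely bookkeeping, obstacle is to verify that a single exponent $\delta$ makes all four Schur inequalities hold and that the regime $j\gtrsim k$ of the column sum closes, which is where the two-sided power bounds for $\mu\in\DDD$ are indispensable.
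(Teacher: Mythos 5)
Your proposal is correct in substance, but it reaches the theorem by a partly different route, so let me compare. For the sufficiency the paper does not use a Schur test: starting from the same coefficient formula \eqref{coef C_om}, it uses Lemma~\ref{caract. pesos doblantes}(iv) and Lemma~\ref{caract M}(ii) for $\om$ to replace the kernel by $(n-k+1)^{\b-1}$ with the prefactor $\mu_{2n+1}(n+1)^{-2\b}$, then invokes Lemma~\ref{caract M}(iii) for $\mu\in\M$ to absorb $(n+1)^{-2\b}$ into the weight, recognizes the resulting sum as $\bigl\|f/(1-z)^{\b}\bigr\|^2_{A^2_{\mu_{[2\b]}}}$, and finishes with the pointwise inequality $|1-z|\ge 1-|z|$; your Schur test with weights $(k+1)^{-\delta}$ does close (the decisive gains are exactly the two you isolate, the factor $\bigl(\tfrac{j+1}{n+1}\bigr)^{\gamma_\om}$ from $\om\in\M$ for the near-diagonal part and $\bigl(\tfrac{k+1}{n+1}\bigr)^{\gamma_\mu/2}$ from $\mu\in\M$ for the far column tail), but it is more computational than the paper's two-line comparison. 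For the necessity of $\om\in\DD$ you are essentially on the paper's road: it tests on $f_N=\sum_{k\le N}\mu_{2k+1}^{-1/2}z^k$ and keeps only the output coefficients with $n\in[4N,5N]$; the point you leave implicit, and which is the whole content of ``carefully accounting for the variation of $\om_{2(n-k)+1}$,'' is that the window $[aN,bN]$ must satisfy $b<2(a-1)$, so that the inequality extracted from boundedness is $\om_{bN}\lesssim\om_{2(a-1)N}$ with the larger index on the right (e.g.\ $\om_{5N}\lesssim\om_{6N}$) and hence non-vacuous; with such a window your exponent $-1$ test functions work just as well as the paper's exponent $-1/2$, since $\mu\in\DD$ gives $\sum_{k\le N}\mu_{2k+1}^{-1}\asymp N/\mu_{2N+1}$ and $\mu_{10N}\asymp\mu_{2N}$. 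For the necessity of $\Dd$ the paper instead uses the long truncations $f_{N,M}=\sum_{n\le MN}\mu_{2n+1}^{-1/2}z^n$ together with Jensen's inequality; your normalized kernels $b^\mu_a$, $a=1-1/N$, yield the same discrete information $\om_N\sum_{j\le N/2}\tfrac{1}{(j+1)\om_{2j+1}}\lesssim1$ more directly (no Jensen is needed because your lower bound for the inner sum is uniform over the block $n\asymp N$), and from it you can also finish discretely, exactly as the paper does in its endgame: apply the inequality at level $MN$, restrict the sum to $j\in[N,MN/2]$ to produce a $\log M$ factor, and conclude $\om_{MN}\le\tfrac12\om_N$ for one fixed large $M$, i.e.\ $\om\in\M$, hence $\om\in\DDD$ by \cite[Theorem~3]{PR19}. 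That discrete finish is preferable to your appeal to the integral condition $\widehat{\om}(t)\int_0^t\tfrac{dr}{(1-r)\widehat{\om}(r)}\lesssim1$: this condition is indeed equivalent to $\Dd$, but it is not the form quoted in \cite{PR19}, so as written that citation would need either a precise reference or a short proof of the equivalence.
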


Finally, we are able to show in Theorem \ref{compact} that there does not exist radial weight $\om$ such that $\cw:\hg\to\hg$, $\g >0$, is compact neither radial weight such that $\cw:A^{2}_{\mu}\to A^{2}_{\mu}$, $\mu \in \DDD$, is compact.

The letter $C=C(\cdot)$ will denote an absolute constant whose value depends on the parameters indicated
in the parenthesis, and may change from one occurrence to another.
We will use the notation $a\lesssim b$ if there exists a constant
$C=C(\cdot)>0$ such that $a\le Cb$, and $a\gtrsim b$ is understood
in an analogous manner. In particular, if $a\lesssim b$ and
$a\gtrsim b$, then we write $a\asymp b$ and say that $a$ and $b$ are comparable.

\section{Previous results.}

Before tackling with the proof of Theorems \ref{weighted H spaces} and \ref{weighted Bergman spaces}, we gather the following two lemmas with some descriptions of the classes of weights $\DD$ and $\M$, which are useful for our purposes. The next one can be found in \cite[Lemma~2.1]{PelSum14}.

\begin{letterlemma}
\label{caract. pesos doblantes}
Let $\om$ be a radial weight on $\D$. Then, the following statements are equivalent:
\begin{itemize}
    \item[(i)] $\om \in \DD$;
    \item[(ii)] There exist $C=C(\om)\geq 1$ and $\a_0=\a_0(\om)>0$ such that
    $$ \omg(r)\leq C \left(\frac{1-r}{1-t}\right)^{\a}\omg(t), \quad 0\leq r\leq t<1;$$
   for all $\a\geq \a_0$.
   \item[(iii)] $$ \int_0^1 s^x \om (s) ds\asymp \omg\left(1-\frac{1}{x}\right),\quad x \in [1,\infty);$$
   \item[(iv)] There exist $C=C(\om)>0$ and $\a=\a(\om)>0$ such that 
   $$ \om_x\leq C \left(\frac{y}{x}\right)^{\a}\om_y,\quad 0<x\leq y<\infty ;$$
    \item[(v)] $\sup\limits_{n\in \N}\frac{\om_n}{\om_{2n}}<\infty .$
\end{itemize}
\end{letterlemma}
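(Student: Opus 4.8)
\section*{Proof proposal}

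The plan is to establish the cycle of implications
$(\mathrm{i})\Rightarrow(\mathrm{ii})\Rightarrow(\mathrm{iii})\Rightarrow(\mathrm{iv})\Rightarrow(\mathrm{v})\Rightarrow(\mathrm{i})$,
which is the most economical route since each step reuses the previous one. The one genuinely analytic ingredient is the passage from the doubling hypothesis on $\omg$ to the polynomial-growth bound in $(\mathrm{ii})$; everything downstream is a matter of translating between the tail integral $\omg$ and the moments $\om_x$, plus elementary monotonicity.

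\emph{From $(\mathrm{i})$ to $(\mathrm{ii})$.} The defining inequality $\omg(r)\le C\,\omg\bigl(\tfrac{1+r}{2}\bigr)$ says that halving the distance to the boundary multiplies the tail by at most a fixed factor $C$. Iterating this $k$ times along the points $r_k=1-2^{-k}(1-r)$ yields $\omg(r)\le C^k\,\omg(r_k)$. Given $0\le r\le t<1$, I would choose $k$ so that $r_k$ straddles $t$, i.e. $1-2^{-k}(1-r)\le t<1-2^{-(k+1)}(1-r)$, which forces $2^k\asymp\frac{1-r}{1-t}$ and hence $C^k=2^{k\log_2 C}\le C'\bigl(\frac{1-r}{1-t}\bigr)^{\a_0}$ with $\a_0=\log_2 C$. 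Using monotonicity of $\omg$ to replace $\omg(r_k)$ by $\omg(t)$ gives the claimed estimate for $\a=\a_0$; for $\a\ge\a_0$ the factor $\bigl(\frac{1-r}{1-t}\bigr)^{\a}\ge\bigl(\frac{1-r}{1-t}\bigr)^{\a_0}$ only helps, so the bound persists.

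\emph{From $(\mathrm{ii})$ to $(\mathrm{iii})$.} For the upper bound I split $\int_0^1 s^x\om(s)\,ds$ at $r=1-\tfrac1x$: on $[0,r]$ I bound $s^x\le 1$ and recognize the remaining mass as $\le\omg(0)\lesssim\omg(r)$ by $(\mathrm{ii})$, while on $[r,1]$ I use $s^x\le 1$ again together with $\int_r^1\om=\omg(r)=\omg\bigl(1-\tfrac1x\bigr)$. For the lower bound I restrict the integral to $[r,1]$, where $s^x\ge\bigl(1-\tfrac1x\bigr)^x\gtrsim 1$, so $\int_0^1 s^x\om\ge\int_r^1 s^x\om\gtrsim\omg\bigl(1-\tfrac1x\bigr)$; the upper estimate is then completed by invoking $(\mathrm{ii})$ to control the tail beyond $r$ by a geometric sum of the form $\sum_j C^{-j}\omg(1-2^{-j}/x)$, which is comparable to $\omg(1-\tfrac1x)$. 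This matching of both bounds is where care is needed, but it is routine once $(\mathrm{ii})$ is in hand.

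\emph{From $(\mathrm{iii})$ to $(\mathrm{iv})$, then $(\mathrm{iv})$ to $(\mathrm{v})$, and closing the loop.} Writing $\om_x\asymp\omg(1-\tfrac1x)$ via $(\mathrm{iii})$ (valid for $x\ge1$), the growth estimate $(\mathrm{iv})$ becomes exactly the polynomial bound $(\mathrm{ii})$ read at the points $r=1-\tfrac1y$, $t=1-\tfrac1x$, so $(\mathrm{iii})$ together with $(\mathrm{ii})$ delivers $(\mathrm{iv})$; I would state it cleanly by composing the two comparabilities. Then $(\mathrm{v})$ is the special case $x=n$, $y=2n$ of $(\mathrm{iv})$, giving $\frac{\om_n}{\om_{2n}}\le C\,2^{\a}$ uniformly in $n$. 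Finally, to recover $(\mathrm{i})$ from $(\mathrm{v})$ I translate the moment ratio back to tails: using $(\mathrm{iii})$ in the reverse direction (which only requires the already-established equivalence among the moment conditions, or a direct comparison $\om_n\asymp\omg(1-\tfrac1n)$ that holds for \emph{any} radial weight on the lower side) I convert $\sup_n\om_n/\om_{2n}<\infty$ into $\omg(1-\tfrac1n)\le C\,\omg(1-\tfrac1{2n})$, and since $1-\tfrac1{2n}$ is comparable to $\tfrac{1+r}{2}$ when $r=1-\tfrac1n$, a standard interpolation over the dyadic scale recovers $\omg(r)\le C'\omg\bigl(\tfrac{1+r}{2}\bigr)$ for all $r$. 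The main obstacle throughout is the lower-bound half of $(\mathrm{iii})$ and the final reversal $(\mathrm{v})\Rightarrow(\mathrm{i})$, since both require upgrading a discrete/one-sided estimate to the continuous doubling statement; the cleanest safeguard is to prove $\om_x\asymp\omg(1-\tfrac1x)$ as a self-contained lemma first and then phrase all remaining steps purely in terms of it.
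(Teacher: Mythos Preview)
The paper does not prove this lemma at all: it is quoted verbatim from \cite[Lemma~2.1]{PelSum14} and used as a black box, so there is no ``paper's own proof'' to compare against. What follows is therefore an assessment of your sketch on its own terms.

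Your cycle $(\mathrm{i})\Rightarrow(\mathrm{ii})\Rightarrow(\mathrm{iii})\Rightarrow(\mathrm{iv})\Rightarrow(\mathrm{v})\Rightarrow(\mathrm{i})$ is the natural plan and the step $(\mathrm{i})\Rightarrow(\mathrm{ii})$ is correct. There are, however, two logical gaps later on. First, in $(\mathrm{ii})\Rightarrow(\mathrm{iii})$ your initial upper-bound argument (``the remaining mass is $\le\omg(0)\lesssim\omg(r)$ by $(\mathrm{ii})$'') is simply false: $(\mathrm{ii})$ gives $\omg(0)\le C(1-r)^{-\alpha}\omg(r)$, which blows up as $r\to1$. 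The dyadic geometric-sum argument you add afterwards is the right fix, but you have attached it to the wrong half of the integral; it is $\int_0^{1-1/x}s^x\omega$, not the tail beyond $r$, that needs the decomposition at the points $1-2^j/x$, where the decay $s^x\lesssim e^{-2^j}$ beats the growth $\omg\lesssim 2^{j\alpha}\omg(1-1/x)$ coming from $(\mathrm{ii})$.

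Second, and more seriously, your closing step $(\mathrm{v})\Rightarrow(\mathrm{i})$ is circular as written. You want to argue $\omg(1-\tfrac1n)\lesssim\om_n\lesssim\om_{2n}\lesssim\omg(1-\tfrac{1}{2n})$; the first inequality holds for every radial weight, as you note, but the last one is precisely the nontrivial half of $(\mathrm{iii})$ and fails in general (take e.g.\ $\omega(s)=e^{-1/(1-s)}$). A clean repair is to prove $(\mathrm{v})\Rightarrow(\mathrm{iv})$ directly---iterate $\om_x\le C\om_{2x}$ dyadically to get $\om_x\le C'(y/x)^{\alpha}\om_y$---and then establish $(\mathrm{iv})\Rightarrow(\mathrm{iii})$ via the integration-by-parts identity $\om_x=x\int_0^1 s^{x-1}\omg(s)\,ds$ together with a dyadic splitting of $[0,1-1/x]$; once the upper bound in $(\mathrm{iii})$ is secured, $(\mathrm{i})$ follows immediately from $\omg(1-\tfrac1n)\asymp\om_n\asymp\om_{2n}\asymp\omg(1-\tfrac1{2n})$. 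Your remark that $(\mathrm{iii})\Rightarrow(\mathrm{iv})$ ``uses $(\mathrm{ii})$'' is a symptom of the same issue: in a strict cycle you cannot invoke $(\mathrm{ii})$ there, so the honest route is $(\mathrm{v})\Rightarrow(\mathrm{iv})\Rightarrow(\mathrm{iii})$ rather than the reverse.
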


The following result can be found in \cite[(2.16) and (2.17)]{PR19}. Denote $\om_{[\b]}(z)=(1-|z|)^{\b} \om (z)$.
\begin{letterlemma}
\label{caract M}
Let $\om$ be a radial weight. The following statements are equivalent:
\begin{itemize}
\item[(i)] $\om	\in \M$;
\item[(ii)] There exist $C=C(\om)>0$ and $\b_0=\b_0(\om)>0$ such that
$$\om_x\geq C \left(\frac{y}{x}\right)^{\b} \om_y, \quad 1 \leq x \leq y<\infty$$
for all $0<\b\le \b_0$;
\item[(iii)] For some (equivalently for each) $\b > 0$, there exists $C = C(\om, \b) > 0$ such that
$$\om_x\leq C x^{\b} (\om_{[\b]})_x,  \quad 1 \leq x <\infty.$$ 
\end{itemize}
\end{letterlemma}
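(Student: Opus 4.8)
The plan is to prove the three-way equivalence by establishing (i)$\Leftrightarrow$(ii) by an elementary iteration, then (ii)$\Rightarrow$(iii) for every $\b>0$, and finally (iii) for a single exponent $\Rightarrow$(ii); the asserted ``for some (equivalently for each) $\b$'' then follows automatically by routing any two instances of (iii) through (ii). Throughout I use only that the moments $x\mapsto\om_x=\int_0^1 s^x\om(s)\,ds$ are positive and nonincreasing.

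For (i)$\Leftrightarrow$(ii): the implication (ii)$\Rightarrow$(i) is immediate on taking $y=Kx$ and choosing $K>1$ so large that $CK^{\b}>1$. For (i)$\Rightarrow$(ii) I would iterate the defining inequality $\om_x\ge C\om_{Kx}$: given $1\le x\le y$, pick the integer $n\ge0$ with $K^n x\le y<K^{n+1}x$, so that $\om_x\ge C^n\om_{K^n x}\ge C^n\om_y$, the last step because the moments are nonincreasing. Since $n>\log_K(y/x)-1$, this gives $\om_x\ge C^{-1}(y/x)^{\b_0}\om_y$ with $\b_0=\log_K C>0$; as $y/x\ge1$, the same inequality persists for every $\b\le\b_0$, which is precisely (ii).

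The core is (ii)$\Rightarrow$(iii). I would start from the pointwise remark that on $\{s:1-s\ge\theta/x\}$ one has $(x(1-s))^{\b}\ge\theta^{\b}$, whence for every $\theta\in(0,1)$
\begin{equation*}
x^{\b}(\om_{[\b]})_x=\int_0^1 s^x\bigl(x(1-s)\bigr)^{\b}\om(s)\,ds\ge\theta^{\b}\int_0^{1-\theta/x}s^x\om(s)\,ds\ge\theta^{\b}\bigl(\om_x-\omg(1-\theta/x)\bigr).
\end{equation*}
It therefore suffices to absorb the boundary term $\omg(1-\theta/x)$ into a fixed fraction of $\om_x$. Two facts enter here: first, the elementary tail bound $\omg(1-t)\lesssim\om_{1/t}$, valid for any radial weight and proved via $\om_{1/t}\ge(1-t)^{1/t}\omg(1-t)\gtrsim\omg(1-t)$; second, the polynomial moment decay supplied by (ii), namely $\om_{x/\theta}\lesssim\theta^{\b'}\om_x$ for some $\b'>0$. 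Combining, $\omg(1-\theta/x)\lesssim\om_{x/\theta}\lesssim\theta^{\b'}\om_x$, so choosing $\theta$ small (depending only on $\om$ and $\b'$) makes $\omg(1-\theta/x)\le\tfrac12\om_x$, and the displayed inequality yields $\om_x\le2\theta^{-\b}x^{\b}(\om_{[\b]})_x$, i.e. (iii), for each $\b>0$ (the range $x\in[1,2)$ is handled trivially, all quantities being comparable there).

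It remains to show that (iii) for a single $\b$ forces (ii); this is the step I expect to be the main obstacle. The difficulty is that the multiplier $(x(1-s))^{\b}$ \emph{enhances} the region $1-s\gg1/x$, so a naive upper estimate only returns $x^{\b}(\om_{[\b]})_x\lesssim\om_{(1-\d)x}$, a \emph{larger} moment, from which no decay can be read off. Instead I would argue by contraposition: if (ii) fails then, by the equivalence (i)$\Leftrightarrow$(ii) just proved, the moments decay at no polynomial rate, and this is exactly the regime in which the mass of $\om$ concentrates in boundary strips $1-s\ll1/x$ at arbitrarily fine scales; there the suppressing factor $(x(1-s))^{\b}\to0$ drives $x^{\b}(\om_{[\b]})_x=o(\om_x)$ along a suitable sequence $x_n\to\infty$, contradicting (iii). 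Making this quantitative—converting ``no polynomial moment decay'' into a concrete over-concentration estimate, organized through the dyadic decomposition $1-s\asymp2^{j}/x$ together with the tail bound $\omg(1-t)\lesssim\om_{1/t}$—is the crux of the argument. Once it is in place, the chain (iii) for some $\b$ $\Rightarrow$ (ii) $\Rightarrow$ (iii) for all $\b$ both closes the proof and yields the ``for some equivalently for each'' assertion.
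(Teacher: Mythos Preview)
The paper does not give its own proof of this lemma; it simply records the result and points to \cite[(2.16) and (2.17)]{PR19}. So there is no ``paper's proof'' to compare against, only your attempt.

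Your treatment of (i)$\Leftrightarrow$(ii) is standard and correct, and your argument for (ii)$\Rightarrow$(iii) is also correct: the lower bound
\[
x^{\b}(\om_{[\b]})_x\ge\theta^{\b}\!\int_0^{1-\theta/x}s^{x}\om(s)\,ds\ge\theta^{\b}\bigl(\om_x-\omg(1-\theta/x)\bigr),
\]
together with the elementary tail estimate $\omg(1-t)\lesssim\om_{1/t}$ and the polynomial decay from (ii), does yield (iii) for every $\b>0$ once $\theta$ is chosen small enough.

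The genuine gap is (iii)$\Rightarrow$(ii). You describe a contrapositive strategy---if $\om\notin\M$ then the mass of $s^{x}\om(s)$ concentrates on $\{1-s\ll 1/x\}$ along some sequence, forcing $x^{\b}(\om_{[\b]})_x=o(\om_x)$---but you do not carry it out, and you explicitly flag ``making this quantitative'' as ``the crux of the argument''. That is exactly right: the failure of $\M$ only says that for each $K$ there is \emph{some} $x$ with $\om_x<2\om_{Kx}$, and turning this into a concentration estimate strong enough to kill \emph{all} dyadic shells $1-s\asymp 2^{j}/x$ (not just the innermost one) is the whole content of the implication. Without that step the equivalence is not established, and in particular the ``for some, equivalently for each $\b$'' clause in (iii)---which you route through (ii)---remains unproved. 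If you want to complete the argument rather than cite \cite{PR19}, the cleanest path is to work with the integrated tails $\omg$ and the identity $(\om_{[\b]})_x=\b\int_0^1(1-t)^{\b-1}\bigl(\int_0^{t}s^{x}\om(s)\,ds\bigr)dt$, which lets you compare $x^{\b}(\om_{[\b]})_x$ directly with a weighted average of moments $\om_y$ over $y\ge x$ and then invoke the negation of (ii) on a carefully chosen scale.
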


Now, we are interested in the weights $\nu$ that satisfy the following equivalence, called Littlewood-Paley formula:
\begin{equation}\label{LP}
 \int_{\D}|f(z)|^{p} \nu (z)dA(z) \asymp |f(0)|^{p}+ \int_{\D}|f'(z)|^{p} (1-|z|^{2})^{p}\nu (z)dA(z). 
\end{equation}
These kind of estimations are useful not only to obtain equivalent norms in terms of derivatives but also due to their relation with bounded Bergman projections, and this is one of the reasons why it is a prominent topic in the operator theory on spaces of analytic functions \cite{AC, APR, BWZ, PP, PR19}. We are interested in the one proved in \cite{AC} en route to a description of the spectra of integration operators on weighted Bergman spaces, where Aleman and Constantin showed that \eqref{LP} holds for every weight $\nu$ on $\mathbb{D}$ which belongs to one of the Békollé classes $B_{p}(\a)$ for some $p>1$ and $\a >-1$.

A weight $\nu$ on $\mathbb{D}$ belong to the Bekollé class $B_{p}(\a)$, $p>1$ and $\a >-1$ if 
$$\left(\int_{S(\theta, h)} \nu dA_{\a}\right)\left(\int_{S(\theta, h)} \nu^{-\frac{p'}{p}} dA_{\a}\right)^\frac{p}{p'} \lesssim (A_{\a}(S(\theta, h)))^p$$ 
for any Carleson square
$S(\theta, h)=\{z=re^{i\a}: 1-h<r<1, \, |\theta -\a|<h/2\}$, $ \theta \in [0, 2\pi]$, $h \in (0,1),$
where $A_{\a}$ denote the measure given by $dA_{\a}=(\a+1)(1-|z|^2)^{\a}dA$ and $1/p+1/p'=1$.

Moreover a weight $\nu$ on $\mathbb{D}$ belongs to the class $B^{\star}_{1}(\eta)$, $\eta >-1$ if 
\[
\int_{\mathbb{D}} \frac{\nu(z)}{|1-\overline{a}z|^{\eta+2}} \, (1-|z|^{2})^{\eta}\, dA(z) \lesssim  \nu(a)
\]
for almost every $a\in \mathbb{D}$.

In fact, they proved not only the belonging to one of the Bekollé classes is sufficient condition in order that \eqref{LP} holds, but also it is necessary for sufficiently regular weights $\nu$ on $\mathbb{D}.$ 

\begin{lettertheorem} \cite[Theorem 3.2]{AC}
\label{L-P}
Let $\nu$ be a strictly positive weight $\nu\in C^1(\mathbb{D})$ which satisfies that $(1-|z|^{2})|\nabla \nu(z)| \leq k_\nu \nu(z)$ for some constant $k_\nu >0$ and all $z \in \D$. Then the following are equivalent:
\begin{itemize}
\item[(i)] The estimate \eqref{LP} holds for all $p>0$;
\item[(ii)] The estimate \eqref{LP} holds for some $p>0$;
\item[(iii)] $\frac{\nu}{(1-|z|)^{\a}}$ belongs to $B_{p}(\a)$ for some $p>1$ and $\a >-1$;
\item[(iv)] $\frac{\nu}{(1-|z|)^{\eta}}$ belongs to $B^{\star}_{1}(\eta)$ for some $\eta >-1$.
\end{itemize}
\end{lettertheorem}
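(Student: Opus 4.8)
The plan is to route all four equivalences through a single reformulation that absorbs the power $(1-|z|)^{\a}$ into the area measure, and then to recognize \eqref{LP} as a two-sided mapping property of the Bergman reproducing kernel governed by Békollé--Bonami theory. Writing $dA_{\a}=(\a+1)(1-|z|^2)^{\a}\,dA$ and $\nu_{\a}=\nu/(1-|z|)^{\a}$, one has $\int_{\D}|f|^p\nu\,dA\asymp\int_{\D}|f|^p\nu_{\a}\,dA_{\a}$ and $\int_{\D}|f'|^p(1-|z|^2)^p\nu\,dA\asymp\int_{\D}|(1-|z|^2)f'|^p\nu_{\a}\,dA_{\a}$. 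Thus \eqref{LP} becomes the assertion that, for $d\sigma=\nu_{\a}\,dA_{\a}$, the comparison $\|f\|_{L^p(\sigma)}^p\asymp|f(0)|^p+\|(1-|z|^2)f'\|_{L^p(\sigma)}^p$ holds. Since (i)$\Rightarrow$(ii) is trivial, it suffices to close the cycle via (ii)$\Rightarrow$(iv), (iv)$\Rightarrow$(iii), and (iii)$\Rightarrow$(i).

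For the sufficiency (iii)$\Rightarrow$(i) I would realize both inequalities of \eqref{LP} as bounds for explicit integral operators built from the standard kernel. Integrating the reproducing formula $f'(w)=c_{\a}\int_{\D}f'(\zeta)(1-\bar\zeta w)^{-(\a+2)}\,dA_{\a}(\zeta)$ from $0$ to $z$ yields
\[
|f(z)|\lesssim|f(0)|+\int_{\D}|f'(\zeta)|\,|1-\bar\zeta z|^{-(\a+1)}\,dA_{\a}(\zeta),
\]
and, writing $|f'(\zeta)|=(1-|\zeta|^2)|f'(\zeta)|/(1-|\zeta|^2)$, the right-hand integral is exactly the maximal Bergman projection $P^{+}_{\a-1}$ applied to $(1-|\cdot|^2)|f'|$, the inserted factor $(1-|z|^2)$ precisely accounting for the one-power gain in the kernel. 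The reverse inequality is treated symmetrically by differentiating the reproducing formula, which produces $P^{+}_{\a+1}$. The Békollé--Bonami theorem — $P^{+}_{\beta}$ is bounded on $L^p(W\,dA_{\beta})$, $p>1$, iff $W\in B_p(\beta)$ — then converts the membership in (iii), up to the harmless relabelling forced by the shifts $\a\mapsto\a\pm1$, into both inequalities of \eqref{LP}.

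For the necessity I would test \eqref{LP} on the localized functions $f_a(z)=(1-\bar a z)^{-c}$, $a\in\D$, with $c$ chosen so that both sides are comparable to integrals of $\nu$ over the Carleson square $S(\theta,h)$ centred at $a$ with $1-|a|\asymp h$; comparing the two sides and pairing with the conjugate exponent to produce the second factor yields the $A_p$-type product defining the Békollé condition. Here the regularity hypothesis $(1-|z|^2)|\nabla\nu(z)|\le k_{\nu}\nu(z)$ is essential: it forces $\nu$ to be slowly varying, so that $\nu(z)\asymp\nu(a)$ on each Bergman disc and averages of $\nu$ over Carleson squares may be replaced by the pointwise value $\nu(a)$. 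This is exactly what upgrades the averaged condition to the pointwise $B^{\star}_1(\eta)$ statement (iv), while a reverse-Hölder/self-improvement argument converts (iv) back into the integral membership (iii) for a suitable pair $(p,\a)$.

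The genuine obstacle is obtaining \eqref{LP} for \emph{all} $p>0$, and in particular for $p\le1$, from a single Békollé membership. For $p\le1$ the Bergman projection is unbounded on $L^p$ even for smooth weights, so the operator-domination argument of the sufficiency step cannot pass through $B_p(\a)$ with $p>1$; instead one must exploit the pointwise condition (iv) together with the regularity of $\nu$ to extract the needed kernel estimates directly. Thus the difficulty is twofold: setting up the reconstruction operators with the correct parameter shifts so that Békollé--Bonami applies, and handling the sub-unit range of $p$ through the $B^{\star}_1(\eta)$ class, where the hypothesis $(1-|z|^2)|\nabla\nu|\le k_{\nu}\nu$ does the decisive work of rendering the pointwise and integral conditions interchangeable across all exponents.
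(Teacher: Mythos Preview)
The paper does not prove this statement: it is quoted as \cite[Theorem~3.2]{AC} and used as a black box in the proof of Theorem~\ref{weighted H spaces} (the case $0<\gamma<1$). There is therefore no ``paper's own proof'' to compare your proposal against.

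As a sketch of the Aleman--Constantin argument your outline is broadly reasonable---routing through Békollé--Bonami boundedness of the maximal Bergman projection for the sufficiency, and testing on kernel-type functions $f_a(z)=(1-\bar a z)^{-c}$ for the necessity, with the regularity hypothesis $(1-|z|^2)|\nabla\nu|\le k_\nu\nu$ supplying the pointwise comparability $\nu(z)\asymp\nu(a)$ on hyperbolic discs. A few points remain vague, however. In (ii)$\Rightarrow$(iv) you assert that testing on $f_a$ produces the $B_p$ product ``paired with the conjugate exponent'', but \eqref{LP} for a \emph{single} $p$ gives no direct access to the dual factor $\int \nu^{-p'/p}$; one typically needs either self-improvement or the pointwise route through $B_1^\star$ first. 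Your (iv)$\Rightarrow$(iii) by ``reverse-Hölder/self-improvement'' is asserted, not argued. And in (iii)$\Rightarrow$(i) for $0<p\le1$ you correctly flag that Békollé--Bonami does not apply, but then only gesture at using $B_1^\star(\eta)$ ``directly'' without saying how the sublinear range is actually handled. These are exactly the places where the work in \cite{AC} lies, so as written your proposal is an accurate road map rather than a proof.
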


\section{Proof of the main results.}

\begin{Prf}\emph{Theorem \ref{weighted H spaces}.}

Since the Bergman case will be deal with in a more general way in Theorem \ref{weighted Bergman spaces}, it is enough to prove the result for $0<\g \leq 1$.

Let us consider the following suitable formula for the generalized Cesàro operator \eqref{coef C_om},
$$ C_\omega(f)(z)=\sum_{n=0}^\infty\left(\sum_{k=0}^n \frac{\widehat{f}(k)}{2(n-k+1)\omega_{2(n-k)+1}}\right) \om_n z^n, \; z \in \D.$$

Firstly, assume $\om \in \DDD$ and note that it is enough proving that there exists a constant $C>0$ such that $\| C_{\om}(f)\|_{\H_{\g}}^2\leq C \|f\|_{\H_\g}^2$ for any function $f \in \H(\D)$ such that $\fg(n)\geq 0$, $n \in \N \cup \{0\}$. 

Now,  by Lemma \ref{caract. pesos doblantes}(iv),
\begin{equation*}
\begin{split}
\| C_{\om}(f)\|_{\H_{\g}}^2 &\asymp \sum_{n=0}^\infty \om_n^2 (n+1)^{1-\g} \left(\sum_{k=0}^n\frac{\widehat{f}(k)}{2(n-k+1)\omega_{2(n-k)+1}}\right)^2 
\\ & \lesssim  \sum_{n=0}^\infty \om_n^2 (n+1)^{1-\g} \left(\sum_{k=0}^n\frac{\widehat{f}(k)}{2(n-k+1)\omega_{n-k+1}}\right)^2
\end{split}
\end{equation*}
 and by Lemma \ref{caract M} (ii), there exists $0<\b<1$ such that $(n-k+1)^{\b}\om_{n-k+1}\gtrsim (n+1)^{\b}\om_{n+1}$ for all $k\leq n$, so
\begin{equation}
\label{i1sufH2}
 \| C_{\om}(f)\|_{\H_{\g}}^2 \lesssim \sum_{n=0}^\infty \frac{1}{(n+1)^{2\b+\g-1}} \left(\sum_{k=0}^n\frac{\widehat{f}(k)}{(n-k+1)^{1-\b}}\right)^2.
\end{equation}

Now, it is well known that $g(z)=\frac{1}{(1-z)^\b}=\sum_{n=0}^{\infty} \a_n z^n \in \H(\D)$, whose Taylor coefficients are given by $\a_n=\frac{\Gamma(n+\b)}{\Gamma(n+1)\Gamma(\b)}$, and folklore estimations for ratios of gamma functions yields $\a_n\asymp \frac{1}{(n+1)^{1-\b}}$. 
In addition, a simple observation yields
 $$\frac{f(z)}{(1-z)^{\b}}= \sum\limits_{n=0}^{\infty}\left(\sum\limits_{k=0}^n \fg (k) \a_{n-k} \right)z^n,\, z \in \D. $$ 

From now on, we will deal with the following two cases separately:

\textbf{Case $\g = 1$:}
Bearing in mind that $\|z^n\|_{A^{2}_{2\b-1}}^2\asymp \frac{1}{(n+1)^{2\b}}$, $n \in \N$ and \eqref{i1sufH2},
we deduce
$$ \| C_{\om}(f)\|_{H^2}^2 \lesssim \sum\limits_{n=0}^{\infty} \|z^n\|_{A_{2\b-1}^2}^2 \left(\sum\limits_{k=0}^n \fg (k) \a_{n-k}\right)^2 \lesssim \left\| \frac{f(z)}{(1-z)^{\b}}\right\|_{A_{2\b-1}^2}^2 \lesssim \|f \|_{H^2}^2,$$
where the last inequality holds if and only if $d \nu(z)=\frac{(1-|z|^2)^{2\b-1}}{|1-z|^{2\b}}dA(z)$ is a Carleson measure (see \cite[Theorem~9.3]{Duren}).

A direct computation using the Cauchy-Schwarz inequality shows that
\begin{equation*}
\begin{split}
\sup\limits_{a \in \D}\int_{\D} \frac{1-|a|^2}{|1-\bar{a}z|^2}d\nu(z) &\lesssim  \sup\limits_{a \in \D} ( 1-|a|^2) \int_0^1 (1-r^2)^{2\b-1}\left(\int_0^{2\pi}\frac{1}{|1-\bar{a}re^{i\theta}|^2|1-re^{i\theta}|^{2\b}}d\theta \right)dr 
\\& \lesssim \sup\limits_{a \in \D} \int_0^1 \frac{( 1-|a|^2) }{(1-|a|r)^{\frac{3}{2}}(1-r)^{\frac{1}{2}}}<\infty,
\end{split}
\end{equation*}
so by \cite[Lemma~6.1]{GiBMO}, $\nu$ is a Carleson measure and this finishes the proof of this case.

\textbf{Case $\g < 1$:} In this case, $\H_{\g}=D^2_\g$, and we need to use that $\|z^n\|_{D^{2}_{2\b+\g}}^2\asymp \frac{1}{(n+1)^{2\b+\g-1}}$ to deduce
\begin{equation}
\begin{split}
\| C_{\om}(f)\|_{\H_{\g}}^2  \lesssim \sum\limits_{n=0}^{\infty} \|z^n\|_{D_{2\b+\g}^2}^2 \left(\sum\limits_{k=0}^n \fg (k) \a_{n-k}\right)^2 
\lesssim \left\| \frac{f(z)}{(1-z)^{\b}}\right\|_{D_{2\b+\g}^2}^2 \lesssim I+II
\end{split}
\end{equation}
where 
$$ I= \int_{\D}\frac{|f'(z)|^2}{|1-z|^{2\b}} (1-|z|^2)^{\g+ 2\b} dA(z)$$
and 
$$II= \int_{\D}\frac{|f(z)|^2}{|1-z|^{2\b +2}} (1-|z|^2)^{\g+ 2\b} dA(z).$$
It is clear that $I \lesssim \|f\|^2_{D^2_{\g}}$. Therefore, the proof of the sufficiency for $0<\g<1$ boils down to prove the inequality
$$\int_{\D}\frac{|f(z)|^2}{|1-z|^{2\b +2}} (1-|z|^2)^{\g+ 2\b} dA(z)\lesssim \| f \|_{D^2_{\g}}^2.$$
In order to simplify notation, let us denote by $\nu(z)=\frac{ (1-|z|^2)^{\g+ 2\b}}{|1-z|^{2\b +2}}$. Next, we are going to verify that $\nu $ satisfies the hypothesis of Theorem \ref{L-P}. Firstly, it is not difficult to show by a computation that $\nu \in C^1(\D)$ and that satisfies the regularity condition $(1-|z|^{2})|\nabla \nu(z)| \leq k_\nu \nu(z)$.

Secondly, set $\eta = 2\g +2\b$ and we will prove that $\frac{\nu}{(1-|z|)^{2\g+2\b}}\in B^{\star}_{1}(2\g+2\b) $, i.e., we have to prove that
\[
\int_{\mathbb{D}} \frac{(1-|z|^{2})^{\g+2\b}}{|1-\overline{a}z|^{2\g+2\b+2}|1-z|^{2\b+2}} \, dA(z) \lesssim  \frac{\nu(a)}{(1-|a|^{2})^{2\g+2\b}}, 
\]
for almost every $a \in \D.$ Now let $b_n=1-\frac{1}{n}$. By Fatou's lemma and by using the estimates in \cite[Lemma 2.5]{OF2} we have that
\begin{align*}
\int_{\mathbb{D}} \frac{(1-|z|^{2})^{\g+2\b}}{|1-\overline{a}z|^{2\g+2\b+2}|1-z|^{2\b+2}} \, dA(z) &\leq \liminf_{n\to \infty} \int_{\mathbb{D}} \frac{(1-|z|^{2})^{\g+2\b}}{|1-\overline{a}z|^{2\g+2\b+2}|1-b_n z|^{2\b+2}} \, dA(z) \\
&\lesssim \liminf_{n\to \infty}  \frac{1}{(1-|a|^{2})^{\g}|1-b_n\overline{a}|^{2+2\b}}\\
&=\frac{1}{(1-|a|^{2})^{\g}|1-a|^{2+2\b}}\\
&= \frac{\nu(a)}{(1-|a|^{2})^{2\g+2\b}}.
\end{align*}
By applying Theorem \ref{L-P}, the proof of the sufficiency is finished.

Conversely, assume $C_{\om}: \H_{\g}\to \H_{\g}$ is bounded, and we are going to show first $\om \in \DD$. Now we consider the following family of functions $f_{N}(z)=\sum\limits_{n=0}^{N} (n+1)^{\frac{\g -1}{2}} z^n$, $N \in \N$. Then $\|f_N\|_{\H_{\g}}^2 \asymp (N+1)$ and 
\begin{equation*}
\begin{split}
\| C_{\om}(f_N)\|_{\H_{\g}}^2 & \asymp \sum\limits_{n=0}^{\infty} (n+1)^{1-\g} \om_n^2 \left(\sum_{k=0}^n \frac{\fg_N(k)}{2(n-k+1)\om_{2(n-k)+1}}\right)^2
\\& \geq \sum\limits_{n=7N}^{8N} (n+1)^{1-\g} \om_n^2 \left(\sum_{k=0}^N \frac{(k+1)^{\frac{\g -1}{2}}}{2(n-k+1)\om_{2(n-k)+1}}\right)^2
\\ & \gtrsim
\frac{\om_{8N}^2 }{\om_{12N}^2} \frac{1}{(N+1)^2}\sum_{n=7N}^{8N}(n+1)^{1-\g} \left(\sum_{k=0}^N  (k+1)^{\frac{\g -1}{2}} \right)^2,
\end{split}
\end{equation*}
for all $N \in \N$, hence, 
\begin{equation*}
\begin{split}
\| C_{\om}(f_N)\|_{\H_{\g}}^2 & \gtrsim \frac{\om_{8N}^2 }{\om_{12N}^2} \frac{(N+1)^{\g+1}}{(N+1)^2} \sum_{n=7N}^{8N}(n+1)^{1-\g} \geq \frac{\om_{8N}^2 }{\om_{12N}^2} (N+1), \; N\in \N.
\end{split}
\end{equation*}
Since $C_{\om}: \H_{\g}\to \H_{\g}$ is bounded, $\om_{8N}\lesssim \om_{12N}$, $n \in \N$ and this implies $\om \in \DD$ by Lemma \ref{caract. pesos doblantes}(v).

Now we will prove $\om \in \M$, which combined with $\om \in \DD$ implies $\om \in \DDD$ by  \cite[Theorem~3]{PR19}.

Consider the family of test functions 
$f_{N,M} (z)=\sum\limits_{n=0}^{MN}z^n, \,  N, \, M \in \N$.
We would like to point out that from now on, the letter $C=C(\g, \om)>0$ will denote a constant whose value depends on $\g>0$ and $\om$, but does not depend on $M$ or $N$, and may change from one occurrence to another.

 On the one hand, observe that 
$\|f_{N,M}\|_{\H_{\g}}^2 \leq C \sum\limits_{n=0}^{MN} (n+1)^{1-\g},$ and on the other hand, 
\begin{equation*}
\begin{split}
\| C_\omega(f_{M,N})\|^2_{\H_\g}&\geq C \sum\limits_{n=0}^{MN}(n+1)^{1-\g} \om_n^2 \left(\sum_{k=0}^n \frac{1}{(n-k+1)\omega_{2(n-k)+1}}\right)^2
\\ &
\geq C \om_{MN}^2  \sum\limits_{n=0}^{MN}(n+1)^{1-\g} \left(\sum_{k=0}^n \frac{1}{(k+1)\omega_{k}}\right)^2, \; M,N \in \N ,
\end{split}
\end{equation*}
hence,

$$ \om_{MN}^2  \left(\frac{1}{\sum\limits_{n=0}^{MN}(n+1)^{1-\g} }  \sum\limits_{n=0}^{MN}(n+1)^{1-\g} \left(\sum_{k=0}^n \frac{1}{(k+1)\omega_{k}}\right)^2 \right) \leq C , \; M,N \in \N .$$
Therefore, by using Jensen inequality,
$$ \om_{MN} \left(\frac{1}{\sum\limits_{n=0}^{MN}(n+1)^{1-\g} }  \sum\limits_{n=0}^{MN}(n+1)^{1-\g} \left(\sum_{k=0}^n \frac{1}{(k+1)\omega_{k}}\right) \right) \leq C,  \; M,N \in \N , $$
so,
$$ \sum_{k=N}^{MN}  \frac{1}{(k+1)} \sum\limits_{n=k}^{MN}(n+1)^{1-\g} \leq C \frac{\om_N}{\om_{MN}} \sum\limits_{n=0}^{MN}(n+1)^{1-\g},  \; M,N \in \N.
$$

Now we are going to show that there exists a sufficiently large $M \in \N $ and $C'>1$ such that $$\frac{1}{C}\frac{1}{\left(\sum\limits_{n=0}^{MN}(n+1)^{1-\g}\right)}\left(\sum\limits_{k=N}^{MN}  \frac{1}{(k+1)} \sum\limits_{n=k}^{MN}(n+1)^{1-\g}\right)> C' \text{ for all }N \in \N .$$ 

Indeed,
 $$\frac{1}{\sum\limits_{n=0}^{MN}(n+1)^{1-\g}}\left(\sum\limits_{k=N}^{MN}  \frac{1}{(k+1)} \sum\limits_{n=k}^{MN}(n+1)^{1-\g}\right)\gtrsim \left( \sum\limits_{k=N}^{MN}  \frac{1}{(k+1)} -\frac{1}{(MN)^{2-\g}} \sum\limits_{k=N}^{MN}(k+1)^{1-\g} \right) ,$$
and due to $\frac{1}{(MN)^{2-\g}} \sum\limits_{k=N}^{MN}(k+1)^{1-\g}$ is uniformly bounded for all $M, N \in \N$, there exist $C_1=C_1(\g)>0$ and $C_2=C_2(\g)>0$ such that
$$ \frac{1}{\sum\limits_{n=0}^{MN}(n+1)^{1-\g}}\left(\sum\limits_{k=N}^{MN}  \frac{1}{(k+1)} \sum\limits_{n=k}^{MN}(n+1)^{1-\g}\right)>  C_1 \log M -C_2, \; M, N \in \N.$$
Then take a sufficiently large $M \in \N $ such that $\log M>\frac{2C+C_2}{C_1}$ so that there exists $C'=C'(\om,\g)>1$ and $M=M(\om,\g)>1$ such that $\om_N\geq C'\om_{MN}$ for all $N \in \N$. This is $\om \in \M$.

\end{Prf}

\begin{Prf}\textit{Theorem \ref{weighted Bergman spaces}.}

Assume $\om \in \DDD$ and note that it is enough proving that there exists a constant $C>0$ such that $\| C_{\om}(f)\|_{A^2_{\mu}}^2\leq C \|f\|_{A^2_{\mu}}^2$ for any function $f \in \H(\D)$ such that $\fg(n)\geq 0$, $n \in \N \cup \{0\}$. 

By following the proof of Theorem \ref{weighted H spaces} we obtain there exists $0<\b<1$ such that 
\begin{equation*}
 \| C_{\om}(f)\|_{A^2_{\mu}}^2 \lesssim \sum_{n=0}^\infty \frac{\mu_{2n+1}}{(n+1)^{2\b}} \left(\sum_{k=0}^n\frac{\widehat{f}(k)}{(n-k+1)^{1-\b}}\right)^2,
\end{equation*}
and by Lemma \ref{caract M}(iii),

$$ \| C_{\om}(f)\|_{A^2_{\mu}}^2 \lesssim \sum\limits_{n=0}^{\infty}(\mu_{[2\b]})_{2n+1} \left(\sum\limits_{k=0}^n \fg (k) \a_{n-k}\right)^2 \lesssim \left\| \frac{f(z)}{(1-z)^{\b}}\right\|_{A^2_{\mu_{[2\b]}}}^2 \lesssim \|f \|^2_{A^2_{\mu}},$$
where we recall that $\a_n$, $n \in \N$, denote the Taylor coefficients of the function $g(z)=\frac{1}{(1-z)^{\b}}.$

Reciprocally, let $\mu \in \DD$ and assume $C_{\om}: A^2_{\mu}\to A^2_{\mu}$ is bounded. Firsty we will show $\om \in \DD$. Now we consider the following family of functions $f_{N}(z)=\sum\limits_{n=0}^{N} (\mu_{2n+1})^{-\frac{1}{2}} z^n$, $N \in \N$. Then $\|f_N\|_{A^2_{\mu}}^2 \asymp (N+1)$ and on the other hand
\begin{equation*}
\begin{split}
\| C_{\om}(f_N)\|_{A^2_{\mu}}^2 & \geq \sum\limits_{n=4N}^{5N}  \mu_{2n+1} \om_n^2 \left(\sum_{k=0}^N \frac{\mu_{2k+1}^{-\frac{1}{2}}}{2(n-k+1)\om_{2(n-k)+1}}\right)^2
\\ & \gtrsim
\frac{\om_{5N}^2 }{\om_{6N}^2} \frac{1}{(N+1)^2}\sum_{n=4N}^{5N}\mu_{2n+1} \left(\sum_{k=0}^N \mu_{2k+1}^{-\frac{1}{2}} \right)^2,
\end{split}
\end{equation*}
for all $N \in \N$, so Lemma \ref{caract. pesos doblantes}(iv) yields there exists $\a=\a(\mu)>0$ such that 
$$ \| C_{\om}(f_N)\|_{A^2_{\mu}}^2 \gtrsim \frac{\om_{5N}^2 }{\om_{6N}^2} \frac{1}{(N+1)}\frac{\mu_{10N+1} }{\mu_{2N+1}}  \left(\sum_{k=0}^N \left(\frac{2k+1}{2N+1}\right)^{\frac{\a}{2}}\right)^2 \gtrsim (N+1)\frac{\om_{5N}^2 }{\om_{6N}^2}.$$

The boundedness of $C_{\om}$ yields $\om_{5N}\lesssim \om_{6N}$, for all $N \in \N$ and this implies $\om \in \DD$ by Lemma \ref{caract. pesos doblantes} (v).

Next we will prove $\om \in \M$. Consider the family of test functions 
$f_{N,M} (z)=\sum\limits_{n=0}^{MN}(\mu_{2n+1})^{-\frac{1}{2}}z^n,$  $ N, \, M \in \N$. As before we obtain $\|f_{N, M}\|_{A^2_{\mu}}^2 \asymp (MN+1)$ and on the other hand, by Lemma \ref{caract. pesos doblantes}(iv) there exists $\a>2$ such that
\begin{equation*}
\begin{split}
\| C_\omega(f_{N,M})\|^2_{A^2_{\mu}}&
\gtrsim \om_{MN}^2  \sum\limits_{n=0}^{MN} \mu_{2n+1} \left(\sum_{k=0}^n \frac{(\mu_{n-k+1})^{-\frac{1}{2}}}{(k+1)\omega_{k+1}}\right)^2 
\\ &\gtrsim \om_{MN}^2  \sum\limits_{n=0}^{MN} \frac{1}{(2n+1)^{\a}} \left(\sum_{k=0}^n \frac{(n-k+1)^{\frac{\a}{2}}}{(k+1)\omega_{k+1}}\right)^2,
\end{split}
\end{equation*}
for all $M,N \in \N$. So Jensen inequality yields
\begin{equation*}
 \left(\frac{1}{MN+1} \sum\limits_{n=0}^{MN} \frac{1}{(2n+1)^{\frac{\a}{2}}} \sum_{k=0}^n \frac{(n-k+1)^{\frac{\a}{2}}}{(k+1)\omega_{k+1}}\right)^2 \lesssim \frac{1}{\om_{MN}^2}, \; M,N \in \N.
\end{equation*}
Therefore, 
$$
\frac{\om_{MN}}{\om_N} \sum\limits_{k=N}^{MN}\frac{1}{k+1}  \sum_{n=k}^{MN} \frac{(n-k+1)^{\frac{\a}{2}}}{(n+1)^{\frac{\a}{2}}} \leq C (MN+1).
$$
To complete the proof, we will show that there exists $M \in \N $ large enough such that $$\frac{1}{C}\frac{1}{MN+1}\sum\limits_{k=N}^{MN}\frac{1}{k+1}  \sum_{n=k}^{MN} \frac{(n-k+1)^{\frac{\a}{2}}}{(n+1)^{\frac{\a}{2}}}>2.$$
Since $\a\geq 2$, $(n-k+1)^{\frac{\a}{2}}\geq (n+1)^{\frac{\a}{2}}(1-\frac{k+1}{n+1})^{\frac{\a}{2}} \geq (n+1)^{\frac{\a}{2}}- \frac{\a}{2}(k+1)(n+1)^{\frac{\a}{2}-1}$  for all $n \geq k$, so
\begin{equation*}
\begin{split}
&\frac{1}{MN+1}\sum\limits_{k=N}^{MN}\frac{1}{k+1}  \sum_{n=k}^{MN} \frac{(n-k+1)^{\frac{\a}{2}}}{(n+1)^{\frac{\a}{2}}}
 \\
&\geq \frac{1}{MN+1}\left(  \sum_{k=N}^{MN} \frac{MN-k+1}{k+1}-\frac{\a}{2}\sum_{k=N}^{MN}\sum\limits_{n=k}^{MN} \frac{1}{n+1}\right)\asymp  \sum_{k=N}^{MN} \frac{1}{k+1} -1
\end{split}
\end{equation*}
 and  there exist $C_1, C_2>0$ such that
\begin{equation*}
\begin{split}
\frac{1}{MN+1}\sum\limits_{k=N}^{MN}\frac{1}{k+1}  \sum_{n=k}^{MN} \frac{(n-k+1)^{\frac{\a}{2}}}{(n+1)^{\frac{\a}{2}}}\geq C_1 \log M - C_2
\end{split}
\end{equation*}
Therefore, for a fixed $M \in \N $ such that $\log M>\frac{2C+C_2}{C_1}$, there exists $C'>1$ such that $\om_N\geq C'\om_{MN}$ for all $N \in \N$. Then $\om \in \M$ and the proof is finished.

\end{Prf}

\section{Compactness}

\begin{lemma}\label{LemmaComp}

Let $\omega$ be a radial weight and  $\{f_k\}_{k=0}^\infty\subset \mathcal{H}(\mathbb{D})$ such that $f_k\to 0$ uniformly on compact subsets of $\D$. Then, $C_{\om}(f_k)\to 0$ uniformly on compact subsets of $\D$.
\end{lemma}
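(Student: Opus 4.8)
The plan is to bound $C_{\om}(f_k)$ pointwise on each closed disc $\{|z|\le r\}$, $r\in(0,1)$, by a constant depending only on $\om$ and $r$ times $\sup_{|w|\le r}|f_k(w)|$, and then let $k\to\infty$. The approach is elementary; essentially everything reduces to making the inner kernel explicit and checking one convergent series.

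First I would use \eqref{eq:B} and termwise integration to write the inner kernel as a power series in $tz$ with nonnegative coefficients,
$$\frac{1}{z}\int_0^z B^{\om}_t(u)\,du=\sum_{n=0}^{\infty}\frac{(tz)^n}{2(n+1)\om_{2n+1}},\qquad t\in[0,1),\ z\in\D .$$
Hence, for $|z|\le r$ and $t\in[0,1)$ one has $\bigl|\frac{1}{z}\int_0^z B^{\om}_t(u)\,du\bigr|\le\sum_{n=0}^{\infty}\frac{(tr)^n}{2(n+1)\om_{2n+1}}$, and since $|f_k(tz)|\le\sup_{|w|\le r}|f_k(w)|$ whenever $|z|\le r$ and $t\in[0,1]$, the triangle inequality for the integral (recall $\om\ge0$) gives
$$\sup_{|z|\le r}\bigl|C_{\om}(f_k)(z)\bigr|\le\Bigl(\sup_{|w|\le r}|f_k(w)|\Bigr)\,\Lambda(r),\qquad \Lambda(r):=\int_0^1\sum_{n=0}^{\infty}\frac{(tr)^n}{2(n+1)\om_{2n+1}}\,\om(t)\,dt .$$

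It then remains to check that $\Lambda(r)<\infty$ for every $r\in(0,1)$. By Tonelli's theorem $\Lambda(r)=\sum_{n=0}^{\infty}\frac{r^n\om_n}{2(n+1)\om_{2n+1}}$, and since $\om_n\le\om_0$ it suffices to control the decay of $\om_{2n+1}$ from below. Because $\omg(s_0)>0$ for every $s_0\in(0,1)$, one has $\om_{2n+1}\ge s_0^{2n+1}\omg(s_0)$, so $\liminf_n\om_{2n+1}^{1/n}\ge s_0^{2}$; letting $s_0\to1^-$ and combining with the trivial bound $\om_{2n+1}^{1/n}\le\om_0^{1/n}\to1$ gives $\om_{2n+1}^{1/n}\to1$, whence $\bigl(r^n\om_0/(2(n+1)\om_{2n+1})\bigr)^{1/n}\to r<1$ and the root test yields $\Lambda(r)<\infty$. (This step is, in effect, just the observation that $C_{\om}(1)\in\H(\D)$.)

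Finally, given a compact set $K\subset\D$, I would pick $r\in(0,1)$ with $K\subset\{|z|\le r\}$; the displayed estimate then gives $\sup_{z\in K}|C_{\om}(f_k)(z)|\le\Lambda(r)\sup_{|w|\le r}|f_k(w)|\to0$ as $k\to\infty$, because $f_k\to0$ uniformly on $\{|w|\le r\}$. The argument has no genuine obstacle; the only point needing a short justification is the finiteness of $\Lambda(r)$, i.e. the fact that the moments $\om_{2n+1}$ cannot decay faster than any geometric sequence.
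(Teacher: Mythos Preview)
Your proof is correct and follows essentially the same route as the paper's: bound the kernel $\tfrac{1}{z}\int_0^z B_t^\om(u)\,du$ on a closed disc $\overline{D(0,r)}$, pull $\sup_{|w|\le r}|f_k(w)|$ out of the integral, and let $k\to\infty$. The only difference is cosmetic: the paper obtains the kernel bound by citing an external lemma and then multiplies by $\om_0$, whereas you make the series explicit and verify the finiteness of $\Lambda(r)=\sum_n\frac{r^n\om_n}{2(n+1)\om_{2n+1}}=C_\om(1)(r)$ directly via the root test, which makes the argument self-contained.
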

\begin{proof}
Let be $M\subset \D$ a compact subset and $K_t^\om(z)=\frac1z\int_0^z B_t^\om(u)\,du$. If $z\in M$,
$$ |C_\om(f_k)(z)|\le \int_0^1 |f_k(tz)| |K_t^\om(z)|\om(t)\,dt.$$
By following the proof of \cite[Lemma 20]{MerPeldelaR}, we obtain that there exists a $\rho_0\in(0,1)$ such that $M\subset \overline{D(0,\rho_0)}$ and
$$ \sup_{\substack{z\in M\\ t\in[0,1)}} |K_t^\omega(z)|\le C(\om,\rho_0)<\infty.$$

Let $\varepsilon>0$. By hypothesis, there exists a $k_0\in\N$ such that for every $k\ge k_0$ and $tz\in \overline{D(0,\rho_0)}$, $|f_k(tz)|<\varepsilon$. Putting all together, we have that
$|C_\om(f_k)(z)|\le \varepsilon\cdot C(\om,\rho_0)\cdot \omega_0$, so $C_\om(f_k)\to 0$ uniformly on $M$.
\end{proof}

Bearing in mind the previous lemma and by following a classic argument (see for example \cite[Theorem 21]{MerPeldelaR}) we claim the following characterization of the compactness holds.

\begin{theorem}\label{thmAuxComp}
Let $\omega$ and $\mu$ be  radial weights, $\gamma>0$, $\mu \in \mathcal{D}$, and $X\in \lbrace \mathcal{H}_{\gamma} , A^{2}_{\mu}\rbrace $. Then, the following assertions are equivalent:
\begin{itemize}
\item[(i)]$\cw:X \to X$ is compact;
\item[(ii)]For every sequence $\{f_k\}_{k=0}^\infty\subset X$ such that $\sup\limits_{k\in\N}\|f_k\|_{X}<\infty$ and $f_k\to 0$ uniformly on compact subsets of $\D$, $\lim\limits_{k\to\infty} \|\cw(f_k)\|_{X}=0$.
\end{itemize}
\end{theorem}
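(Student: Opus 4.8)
\textbf{Proof proposal for Theorem \ref{thmAuxComp}.}

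The plan is to prove this by the standard two-sided argument for compactness of operators on spaces of analytic functions, using Lemma \ref{LemmaComp} as the key input. The direction $(ii)\Rightarrow(i)$ is the substantive one and will require knowing that bounded sequences in $X$ admit subsequences converging uniformly on compact subsets to a function again in $X$; the direction $(i)\Rightarrow(ii)$ is essentially formal.

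For $(i)\Rightarrow(ii)$: assume $\cw:X\to X$ is compact and take $\{f_k\}\subset X$ with $\sup_k\|f_k\|_X<\infty$ and $f_k\to0$ uniformly on compacta. Suppose toward a contradiction that $\|\cw(f_k)\|_X\not\to0$; passing to a subsequence we may assume $\|\cw(f_k)\|_X\ge\delta>0$ for all $k$. By compactness of $\cw$ there is a further subsequence (not relabeled) with $\cw(f_k)\to g$ in the norm of $X$ for some $g\in X$. Since norm convergence in $\H_\g$ and in $A^2_\mu$ implies uniform convergence on compact subsets (this is the standard fact that point evaluations are bounded, used already in the introduction for $A^2_\om$; for $\H_\g$ it follows from the reproducing kernel being locally bounded), we get $\cw(f_k)\to g$ uniformly on compacta. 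On the other hand, $f_k\to0$ uniformly on compacta, so Lemma \ref{LemmaComp} gives $\cw(f_k)\to0$ uniformly on compacta. Hence $g=0$, contradicting $\|g\|_X=\lim\|\cw(f_k)\|_X\ge\delta>0$.

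For $(ii)\Rightarrow(i)$: let $\{h_k\}\subset X$ be a bounded sequence, say $\|h_k\|_X\le R$; we must extract a subsequence with $\{\cw(h_k)\}$ norm-convergent in $X$. Because point evaluations on $X$ are locally uniformly bounded, $\{h_k\}$ is a normal family, so some subsequence $\{h_{k_j}\}$ converges uniformly on compact subsets of $\D$ to some $h\in\H(\D)$; moreover $h\in X$ with $\|h\|_X\le R$, since the Taylor coefficients of $h_{k_j}$ converge to those of $h$ and the $X$-norm is (comparable to) a weighted $\ell^2$ norm of the coefficients, so Fatou's lemma applies. Then $f_j:=h_{k_j}-h$ is a bounded sequence in $X$ tending to $0$ uniformly on compacta, and $C_\om$ is bounded on $X$ by Theorem \ref{weighted H spaces} or Theorem \ref{weighted Bergman spaces}. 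Hypothesis $(ii)$ gives $\|\cw(f_j)\|_X\to0$, i.e. $\cw(h_{k_j})\to\cw(h)$ in $X$. Thus every bounded sequence has a subsequence whose image under $\cw$ converges, so $\cw$ is compact.

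The main obstacle, and the only point needing a little care, is the claim in $(ii)\Rightarrow(i)$ that the locally uniform limit $h$ of a bounded sequence in $X$ again belongs to $X$ with a norm bound: this is where one uses the explicit coefficient description of the norms, $\|f\|_{\H_\g}^2\asymp\sum_n|\widehat f(n)|^2(n+1)^{1-\g}$ and $\|f\|_{A^2_\mu}^2=\sum_n2\mu_{2n+1}|\widehat f(n)|^2$, together with Fatou's lemma, since locally uniform convergence forces coefficientwise convergence. Everything else is the routine normal-families argument plus Lemma \ref{LemmaComp}.
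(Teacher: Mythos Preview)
Your argument is the standard normal-families/subsequence argument that the paper has in mind; the paper itself does not spell out a proof but simply invokes Lemma~\ref{LemmaComp} together with ``a classic argument (see for example \cite[Theorem~21]{MerPeldelaR})''. So in approach you are doing exactly what the paper intends, only in more detail.

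There is one slip in the direction $(ii)\Rightarrow(i)$. You justify $C_\omega(h)\in X$ (equivalently, that $C_\omega$ is bounded on $X$) by appealing to Theorem~\ref{weighted H spaces} or Theorem~\ref{weighted Bergman spaces}. Those theorems, however, say that $C_\omega$ is bounded \emph{if and only if} $\omega\in\DDD$, and Theorem~\ref{thmAuxComp} makes no such assumption on $\omega$; indeed, Theorem~\ref{compact} applies Theorem~\ref{thmAuxComp} for arbitrary radial $\omega$ precisely to rule out compactness in all cases. The fix is easy and internal to your own argument: hypothesis $(ii)$ already forces $C_\omega$ to map $X$ into $X$ (and in fact to be bounded). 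For any $g\in X$ set $f_k=g/k$; then $\{f_k\}$ is bounded in $X$ and $f_k\to0$ uniformly on compacta, so $(ii)$ gives $\|C_\omega(g)\|_X/k=\|C_\omega(f_k)\|_X\to0$, whence $\|C_\omega(g)\|_X<\infty$. (A rescaling $f_k=g_k/\sqrt{k}$ of a putative sequence with $\|g_k\|_X=1$, $\|C_\omega(g_k)\|_X\ge k$ even yields boundedness.) With this correction, your proof is complete and matches the paper's intended route.
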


Once we have the previous result, we are able to show that there does not exist radial weight $\om$ such that $\cw:\hg\to\hg$, $\g >0$ is compact neither $\cw:A^{2}_{\mu}\to A^{2}_{\mu}$, $\mu \in \DDD$, is compact.

\begin{theorem}
\label{compact}
Let $\omega$ and $\mu$ be  radial weights, $\gamma>0$, $\mu \in \mathcal{D}$, and $X\in \lbrace \mathcal{H}_{\gamma} , A^{2}_{\mu}\rbrace $. Then, $\cw: X \to X$ is not compact.
\end{theorem}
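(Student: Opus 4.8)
The plan is to prove Theorem~\ref{compact} by contradiction, using the equivalent condition in Theorem~\ref{thmAuxComp}(ii). Suppose $C_\om:X\to X$ were compact. Since any radial weight for which $C_\om$ is even bounded must satisfy $\om\in\DDD$ by Theorems~\ref{weighted H spaces} and~\ref{weighted Bergman spaces}, we may assume $\om\in\DDD$ from the start. The idea is to exhibit a sequence $\{f_k\}\subset X$ that is bounded in the $X$-norm, converges to $0$ uniformly on compact subsets of $\D$, but for which $\|C_\om(f_k)\|_X$ does not tend to $0$; this contradicts Theorem~\ref{thmAuxComp}(ii).

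The natural candidate is a \emph{lacunary-type} or \emph{tail} sequence built from the test functions already used in the necessity parts of the proofs. Concretely, for the Hardy--Dirichlet case $X=\H_\g$ take $f_k(z)=\sum_{n=N_k}^{2N_k}(n+1)^{(\g-1)/2}z^n$ with $N_k\to\infty$ (and the analogous $f_k(z)=\sum_{n=N_k}^{2N_k}(\mu_{2n+1})^{-1/2}z^n$ for $X=A^2_\mu$). Each $f_k$ has $\|f_k\|_X^2\asymp N_k$, and since all its Taylor coefficients are supported on frequencies $\ge N_k$, one checks that $f_k\to 0$ uniformly on compact subsets of $\D$ (the partial sums of $\sum (n+1)^{(\g-1)/2}r^n$ over a tail block tend to $0$ for fixed $r<1$). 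For the lower bound on $\|C_\om(f_k)\|_X$ one repeats essentially verbatim the computation from the converse direction of Theorem~\ref{weighted H spaces}: restricting the outer sum to $n\in[7N_k,8N_k]$ and using $\om\in\DD$ together with Lemma~\ref{caract. pesos doblantes}(iv)–(v) to control the moment ratios $\om_{8N_k}^2/\om_{12N_k}^2\gtrsim 1$, one obtains $\|C_\om(f_k)\|_X^2\gtrsim N_k\asymp\|f_k\|_X^2$. Dividing by $\|f_k\|_X^2$ shows $\|C_\om(f_k/\|f_k\|_X)\|_X\gtrsim 1$ while $f_k/\|f_k\|_X\to0$ on compacta and is norm-bounded: contradiction.

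The main technical point — and the step I would be most careful with — is verifying that these normalized test functions genuinely converge to $0$ uniformly on compact subsets of $\D$. This is where the \emph{tail} (rather than \emph{initial-block}) choice of frequency range is essential: a function like $\sum_{n=0}^{N}(n+1)^{(\g-1)/2}z^n$ does not go to $0$ on compacta, but its high-frequency tail block does, because for $|z|=r<1$ the contribution of frequencies $\ge N_k$ is bounded by $\sum_{n\ge N_k}(n+1)^{(\g-1)/2}r^n\to0$ as $N_k\to\infty$. One then has to confirm that the Cesàro lower-bound estimate from the necessity argument still goes through when the test function is supported on a high tail block rather than on $[0,N]$; this works because in \eqref{coef C_om} the coefficient $\widehat{f}(k)$ for $k\in[N_k,2N_k]$ still contributes to output frequencies $n$ up to $4N_k$ say, and one localizes the outer sum to a band of $n$'s comparable to $N_k$ and strictly above $2N_k$, so that $\om_{n-k+1}$ and $\om_n$ can be compared via the $\DD$-condition exactly as before.

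Finally, since Theorem~\ref{thmAuxComp} is stated uniformly for $X\in\{\H_\g,A^2_\mu\}$, the two cases are handled in parallel with the only change being the normalization weights in the definition of $f_k$ and the use of Lemma~\ref{caract. pesos doblantes}(iv) (with an exponent $\a=\a(\mu)$) to handle the ratios $\mu_{10N_k+1}/\mu_{2N_k+1}$ in the Bergman setting, precisely as in the converse part of Theorem~\ref{weighted Bergman spaces}. Thus no genuinely new estimate is needed beyond what was established for boundedness; the entire content of the theorem is that the \emph{same} lower bounds that force $\om\in\DDD$ also obstruct compactness, because they show $C_\om$ is bounded below on a normalized null-sequence. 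I would present the Hardy--Dirichlet computation in full and then remark that the Bergman case is identical \emph{mutatis mutandis}.
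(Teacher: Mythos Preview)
Your argument is correct, but it differs from the paper's proof in the choice of test family. The paper does not use tail blocks: for $X=\H_\g$ it takes the one-parameter family
\[
f_a(z)=(1-a^2)^{1/2}\sum_{n=0}^\infty \frac{a^n}{(n+1)^{(1-\gamma)/2}}\,z^n,\qquad a\in(0,1),
\]
(and the analogous family with coefficients $(1-a^2)^{1/2}\mu_{2n+1}^{-1/2}a^n$ for $X=A^2_\mu$), so that $\|f_a\|_X\asymp 1$ and $f_a\to 0$ on compacta as $a\to 1^-$. The lower bound $\|C_\om(f_a)\|_X\gtrsim 1$ is obtained by restricting the outer sum to even indices $2n$ and the inner sum to $k\le n$, which forces $2n\le 2(2n-k)+1$ and hence $\om_{2n}\ge \om_{2(2n-k)+1}$ \emph{by mere monotonicity of the moments}. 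In particular, the paper's argument works for every radial weight $\om$ directly, with no appeal to $\om\in\DDD$ and no preliminary reduction via boundedness.

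Your route is also valid: the reduction ``compact $\Rightarrow$ bounded $\Rightarrow\om\in\DDD$'' is legitimate, and the tail-block computation you outline does give $\|C_\om(f_k)\|_X^2\gtrsim N_k$. Two small remarks. First, you do not actually need $\om\in\DD$ to control the ratio $\om_n/\om_{2(n-j)+1}$ on your chosen range: for $n\in[7N_k,8N_k]$ and $j\in[N_k,2N_k]$ one has $2(n-j)+1\ge 10N_k+1>n$, so $\om_n\ge\om_{2(n-j)+1}$ by monotonicity alone, exactly as in the paper. Second, in the Bergman case you do need $\mu\in\DD$ (via Lemma~\ref{caract. pesos doblantes}) both to ensure $\mu_{2n+1}^{-1/2}$ has at most polynomial growth (so the normalized $f_k$ vanish on compacta) and to compare $\mu_{16N_k+1}$ with $\mu_{2N_k+1}$ in the lower bound; this is the analogue of the paper's use of Lemma~\ref{caract. pesos doblantes}(iv) for $\mu$. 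What your approach buys is a transparent reuse of the necessity estimates from Theorems~\ref{weighted H spaces} and~\ref{weighted Bergman spaces}; what the paper's approach buys is a self-contained proof that avoids the case split on $\om$ and makes explicit that non-compactness holds for \emph{every} radial $\om$ by a single computation.
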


\begin{proof}
\textbf{Case $X=\hg$.}
For each $a\in(0,1)$ we set
$$ f_a(z)=\sum_{n=0}^{\infty}(1-a^2)^{\frac{1}{2}}\frac{a^n}{(n+1)^{\frac{1-\gamma}{2}}} z^n.$$ 
Consequently, it is obvious that
$$
\|f_a\|_{\hg}^2\asymp \sum\limits_{n=0}^\infty |\widehat{f}_a(n)|^2(n+1)^{1-\gamma}
= \sum\limits_{n=0}^\infty (1-a^2)a^{2n}=1, \quad a\in(0,1).$$
Furthermore, it is clear that $f_a\to 0$ as $a\to1^-$ uniformly on compact subsets of $\D$. In addition,  we have
\begin{align*}
\|\cw(f_a)\|_{\hg}^2&\asymp \sum\limits_{n=0}^\infty \omega_n^2\left( \sum\limits_{k=0}^n \frac{\widehat{f}_a(k)}{2(n-k+1)\omega_{2(n-k)+1}}\right)^2(n+1)^{1-\gamma}
\\ 
&\asymp (1-a^2) \sum\limits_{n=0}^\infty \omega_n^2\left( \sum\limits_{k=0}^n \frac{a^k \left( k+1\right)^{\frac{\gamma-1}{2}}}{(n-k+1)\omega_{2(n-k)+1}}\right)^2(n+1)^{1-\gamma}
\\ 
&\ge (1-a^2)\sum\limits_{n=0}^\infty \frac{\omega_n^2 a^{2n}}{(n+1)^{1+\gamma}}\left( \sum\limits_{k=0}^n \frac{\left( k+1\right)^{\frac{\gamma-1}{2}}}{\omega_{2(n-k)+1}}\right)^2
\\ 
&\gtrsim (1-a^2)\sum\limits_{n=0}^\infty \frac{\omega_{2n}^2 a^{4n}}{(n+1)^{1+\gamma}}\left( \sum\limits_{k=0}^n \frac{\left( k+1\right)^{\frac{\gamma-1}{2}}}{\omega_{2(2n-k)+1}}\right)^2
\\ 
&\ge (1-a^2) \sum\limits_{n=0}^\infty \frac{a^{4n}}{(n+1)^{1+\gamma}}\left( \sum\limits_{k=0}^n \left( k+1\right)^{\frac{\gamma-1}{2}}\right)^2
\\
 &\asymp 1,
\end{align*}
so using Theorem \ref{thmAuxComp} we deduce that $\cw:\hg\to\hg$ is not a compact operator.

\textbf{Case $X=A^2_{\mu}$.} For each $a\in(0,1)$ we consider
$$ f_a(z)=\sum_{n=0}^{\infty}(1-a^2)^{\frac{1}{2}}\mu_{2n+1}^{-\frac{1}{2}} a^n z^n.$$ As a result $
\|f_a\|_{A^{2}_{\mu}}^2\asymp 1,$ $a\in(0,1)$ and it is clear that $f_a\to 0$ as $a\to1^-$ uniformly on compact subsets of $\D$. By following the argument of the previous case it is not difficult to show 
$$ \|\cw(f_a)\|_{A^{2}_{\mu}}^2 \gtrsim  (1-a^2) \sum\limits_{n=0}^\infty \frac{a^{4n}}{(n+1)^{2}}\left( \sum\limits_{k=0}^n \mu_{2k+1}^{-\frac{1}{2}}\right)^2\mu_{2n+1}, \quad a \in (0,1),$$
so Lemma \ref{caract. pesos doblantes}(iv) yields that there exists $\a=\a(\mu)>0$ such that 
$$
\|\cw(f_a)\|_{A^{2}_{\mu}}^2 \gtrsim  (1-a^2) \sum\limits_{n=0}^\infty \frac{a^{4n}}{(n+1)^{2}}\left( \sum\limits_{k=0}^n \left(\frac{2k+1}{2n+1}\right)^{\frac{\alpha}{2}}\right)^2
\asymp 1.$$

Therefore, using Theorem \ref{thmAuxComp} again we deduce that $\cw:A^{2}_{\mu} \to A^{2}_{\mu}$ is not a compact operator.
\end{proof}


\begin{thebibliography}{99}
\bibitem{AC} A. Aleman and O. Constantin, Spectra of integration operators on weighted Bergman spaces.
J. Anal. Math. 109 (2009), 199–231.


\bibitem{APR} A. Aleman, S. Pott and M.C. Reguera, Characterizations of a limiting class $B_{\infty}$ of Békollé-Bonami weights. Rev. Mat. Iberoam. 35 (2019), no.6, 1677–1692.


\bibitem{Andersen}  
 K.F. Andersen, Cesàro averaging operators on Hardy spaces,
 Proc. Roy. Soc. Edinburgh Sect. A 126 (1996), no.3, 617–624.


\bibitem{BWZ} G. Bao, H. Wulan and K. Zhu, A Hardy-Littlewood theorem for Bergman spaces, Ann. Acad. Sci. Fenn. Math. 43 (2018), no. 2, 807–821.

 
\bibitem{Blascocomplex}
O. Blasco, Cesàro-type operators on Hardy spaces,
J. Math. Anal. Appl. 529 (2024), no. 2, Paper No. 127017, 26 pp.


\bibitem{Duren} 
P.~Duren, Theory of $H^p$ spaces. Pure and Applied Mathematics, Vol. 38 Academic Press, New York-London 1970 xii+258 pp. 

\bibitem{Ga2001} P. Galanopoulos, The Cesàro operator on Dirichlet spaces. Acta Sci. Math. (Szeged) 67 (2001), no. 1-2, 411–420.

\bibitem{GaGiMer22}  P. Galanopoulos, D. Girela and N. Merchán,  Cesàro-like operators acting on spaces of analytic functions. Anal. Math. Phys. 12 (2022), no. 2, Paper No. 51, 29 pp. 

\bibitem{GaGirMasMer}  P. Galanopoulos, D. Girela, A. Mas and  N. Merchán,  Operators induced by radial measures acting on the Dirichlet space. Results Math. 78 (2023), no.3, Paper No. 106, 24 pp.

\bibitem{GiBMO} D.~Girela, Analytic functions of bounded mean oscillation. Complex function spaces (Mekrijärvi, 1999), 61–170, Univ. Joensuu Dept. Math. Rep. Ser., 4, Univ. Joensuu, Joensuu, 2001.

\bibitem{Hardy1920} G.H. Hardy, Note on a theorem of Hilbert. 
Math. Z. 6 (1920), no. 3-4, 314–317. 

\bibitem{Hardy1925} G.H. Hardy, Notes on some points in the integral calculus LX: An inequality between integrals,
Messenger of Math. 60 (1925), no. 54, 150–156.

\bibitem{Hardy29} G.H. Hardy, Notes on some points in the integral calculus LXVI: the arithmetic mean of a Fourier constant, Messenger Math. 58 (1929) 50–52


\bibitem{Landau} E. Landau, A note on a theorem concerning series of positive terms: extract from a letter from Prof. E. Landau to Prof. I. Schur (communicated by G.H. Hardy), J. Lond. Math. Soc. 1 (1926) 38–39.



\bibitem{MerPeldelaR} N.~Merchán, J.~A.~ Pel\'aez  and E.~de~la~Rosa, Hilbert-type operator induced by radial weight on Hardy spaces. Rev. R. Acad. Cienc. Exactas Fís. Nat. Ser. A Mat. RACSAM118 (2024), no.1, Paper No. 2, 36 pp.

\bibitem{Miao}J. Miao, The Cesàro operator is bounded on $H^p$ for $0 < p <1$. Proc. Amer. Math. Soc. 116 (1992), 1017–1019.

\bibitem{OF2}  J. M. Ortega and J. F\`{a}brega, Pointwise multipliers and corona type decomposition in $ BMOA$. Ann. Inst. Fourier (Grenoble) 46 (1996), no.1, 111–137.


\bibitem{PelSum14} J.~A.~ Pel\'aez, Small weighted Bergman spaces. Proceedings of the Summer School in Complex and Harmonic Analysis, and Related Topics, 29–98,
Publ. Univ. East. Finl. Rep. Stud. For. Nat. Sci., 22, Univ. East. Finl., Fac. Sci. For., Joensuu, 2016.

 \bibitem{PP} M. Pavlović and J.A. Peláez, An equivalence for weighted integrals of an analytic function and its derivative, Math. Nachr. 281 (2008), no.11, 1612–1623.

\bibitem{PR19} J.~A.~ Pel\'aez and J. R\"atty\"a, Bergman projection induced by a radial weight. Adv. Math. 391 (2021), Paper No. 107950, 70 pp. 

\bibitem{memoirs} J.A. Peláez and J. Rättyä, Weighted Bergman spaces induced by rapidly increasing weights, Mem. Am.
Math. Soc. 227 (1066) (2014).

\bibitem{PeldelaRosa21} J.A. Peláez, E. de la Rosa, Hilbert-type operator induced by radial weight. J. Math. Anal. Appl. 495 (2021), no. 1, Paper No. 124689, 22 pp. 

\bibitem{SiskakisHp}  A.G. Siskakis, Composition semigroups and the Cesàro operator on $H^p$, J. London Math. Soc. (2) 36 (1987), no.1, 153–164.

\bibitem{SisBergman1} A.G. Siskakis, On the Bergman space norm of the Cesàro operator, Arch. Math. (Basel) 67 (1996), no. 4, 312–318. 


\bibitem{SiskakisH1} A.G. Siskakis, The Cesàro operator is bounded on H1, Proc. Amer. Math. Soc. 110 (1990), no.2, 461–462.

\bibitem{Stegenga1980} D.~A.~Stegenga, Multipliers of the Dirichlet space. Illinois J. Math. 24 (1980), no. 1, 113–139.

\bibitem{Stempak} K. Stempak, Cesàro averaging operators. Proc. Roy. Soc. Edinburgh Sect. A 124 (1994), no. 1, 121–126. 

\end{thebibliography}
\end{document}